\newcommand{\Z}{\mathbb Z}
\newcommand{\SL}{{\text {\rm SL}}}
\newcommand{\leg}[2]{\genfrac{(}{)}{}{}{#1}{#2}}
\theoremstyle{plain}
\newtheorem{theorem}{Theorem}[section]
\newtheorem{corollary}[theorem]{Corollary}
\newtheorem{lemma}[theorem]{Lemma}
\newtheorem{proposition}[theorem]{Proposition}
\theoremstyle{definition}
\title{$p$-adic properties of coefficients of weakly holomorphic modular forms}
\author{Darrin Doud, Paul Jenkins}
\address{
Department of Mathematics, Brigham Young University, Provo, UT  84602}
\date{\today}
\email{doud@math.byu.edu, jenkins@math.byu.edu}
\begin{document}

\begin{abstract}
We examine the Fourier coefficients of modular forms in a canonical
basis for the spaces of weakly holomorphic modular forms of weights
$4$, $6$, $8$, $10$, and $14$, and show that these coefficients are
often highly divisible by the primes 2, 3, and 5.

\end{abstract}
\maketitle
\section{Introduction}

Let $\Delta(\tau)$ be the unique normalized cusp form of weight $12$
for $\SL_2(\Z)$ given by \[\Delta(\tau) = q \prod_{n=1}^\infty
(1-q^n)^{24} = \sum_{n=1}^{\infty} \tau(n) q^n.\]  Here, as is
traditional, $\tau$ is a complex number in the upper half plane, $q
= e^{2 \pi i \tau}$, and $\tau(n)$ is Ramanujan's tau-function.  Let
\[j(\tau) = q^{-1} + 744 + 196884q + \ldots = q^{-1} + 744 + \sum_{n
\geq 1} c(n) q^n\] be the standard modular $j$-function, a weakly
holomorphic modular form of weight $0$ for $\SL_2(\Z)$.

The arithmetic properties of the Fourier coefficients $\tau(n)$ of
$\Delta$ and $c(n)$ of $j$ have been studied extensively.  For
instance, Ramanujan \cite{Ramanujan} proved that
\[\tau(2n) \equiv 0 \pmod{2},\quad\tau(3n) \equiv 0 \pmod{3},\quad\tau(5n) \equiv 0 \pmod{5},\]
and Lehner \cite{Lehner1, Lehner2} proved that
\[c(2^a 3^b 5^c 7^d n) \equiv 0 \pmod{2^{3a+8} 3^{2b+3} 5^{c+1}
7^d}.\]  In fact, Lehner's results are much more general; he proved
similar congruences for the coefficients of many weakly holomorphic
modular functions on $\Gamma_0(p)$.  In this paper we will prove
such divisibility results for the Fourier coefficients of a large
class of weakly holomorphic modular forms of positive integer weight
on $\SL_2(\Z)$.

Recall that a \emph{weakly holomorphic modular form} of weight $k
\in 2\Z$ satisfies the modular equation \begin{equation*} \label{mod}
f\left(\frac{a\tau+b}{c\tau+d}\right) = (c\tau + d)^k
f(\tau)\end{equation*} for all $\left(\begin{smallmatrix}a&b\\c&d
\end{smallmatrix}\right)$ in some finite index subgroup $\Gamma$ of
$\SL_2(\Z)$ and is holomorphic on the upper half plane, but may have
poles at any or all of the cusps of $\Gamma$.  We denote the space
of holomorphic modular forms by $M_k(N)$ and denote by $M_{k}^!(N)$
the space of weakly holomorphic modular forms on $\Gamma_0(N) = \{
\left(\begin{smallmatrix}a&b\\c&d
\end{smallmatrix}\right) \in \SL_2(\Z) : c \equiv 0 \pmod{N} \}$.
When $N = 1$ we write simply $M_k$ and $M_k^!$, and note that any
form $f \in M_k^!$ is uniquely determined by its Fourier expansion
 $\sum_{n \geq n_0} a(n) q^n$,  where $n_0 \geq 0$ if $f
\in M_k$.

Because $\SL_2(\Z)$ has only one cusp (at $\infty$), there is a very
nice canonical basis for the space $M_{k}^!$, indexed by the order
of the pole at $\infty$.  To define this basis (as
in~\cite{Duke-Jenkins}) write $k = 12\ell +
k'$, where $\ell \in \Z$ and $k' \in \{0, 4, 6, 8, 10, 14\}$, so
that if $\ell \geq 0$, the space of cusp forms of
weight $k$ has dimension $\ell$. For every integer $m \geq -\ell$, there exists
a unique weakly holomorphic modular form $f_{k,m} \in M_{k}^!$ with
a $q$-expansion of the form
\[f_{k, m}(\tau) = q^{-m} + {O}(q^{\ell+1});\] together
these $f_{k, m}$ form a basis for $M_k^!$. It is straightforward to
see that any modular form $f = \sum a(n) q^n$ in $M_{k}^!$, if its
first few Fourier coefficients are known, may easily be written in
terms of these basis elements as \[f = \sum_{n_0 \leq n \leq \ell}
a(n) f_{k, -n}.\]

The basis elements $f_{k, m}$ may be directly constructed from
$\Delta$, $j$, and the Eisenstein series $E_{k'}$, where we let $E_0
= 1$. From the standard valence formula, we know that
\begin{equation}\label{valence} \textrm{ord}_\infty(f) \leq \ell\end{equation} for all $f \in
M_{k}^!$. It is also clear that the form $\Delta^\ell E_{k'}$ has
order $\ell$ at $\infty$, so it must be $f_{k, -\ell}$; it is unique
because the difference of any two such forms will have a Fourier
expansion that is ${O}(q^{\ell+1})$, which must be zero by
(\ref{valence}). We can then construct the $f_{k, m}$ iteratively by
multiplying $f_{k, m-1}$ by $j(\tau) \in M_0^!$ to get a form of
weight $k$ with Fourier expansion beginning $q^{-m}$, and then
subtracting appropriate integer multiples of $f_{k, i}$ (where
$-\ell < i < m$) to eliminate the $q^{-i}$ terms in the Fourier
expansion.  This construction shows that \[f_{k,m} = \Delta^\ell
E_{k'} F_{k, D}(j),\] where $F_{k, D}(x)$ is a monic polynomial in
$x$ of degree $D = \ell + m$ with integer coefficients.

We define the Fourier coefficients $a_k(m, n)$ of these basis
elements by \[f_{k,m}(\tau) = q^{-m} + \sum_n a_k(m, n) q^n,\]
noting that $a_k(m, n) = 0$ when $m < -\ell$ or $n \leq \ell$ or
when $m$ or $n$ are not integers.  Since each of $E_{k'}, \Delta,
j$, and $F_{k,m}$ has integer coefficients, it follows that
$a_{k}(m, n) \in \Z$.

These basis elements are studied extensively in~\cite{Duke-Jenkins};
for instance, for basis elements which are not cusp forms, the zeros
in the fundamental domain are all shown to lie on the unit circle.
Additionally, the following theorem is proved.
\begin{theorem}[\cite{Duke-Jenkins}, Theorem 2] \label{genfn}
For any even integer $k$, the basis elements $f_{k, m}$ satisfy the
generating function
\[\sum_{m \geq -\ell} f_{k,m}(z) q^m = \frac{f_{k, -\ell}(z) f_{2-k,
1+\ell}(\tau)}{j(\tau) - j(z)}.\] \end{theorem}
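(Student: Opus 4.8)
The plan is to expand the right-hand side as a power series in $q = e^{2\pi i\tau}$, to show that each coefficient is a weakly holomorphic modular form of weight $k$ in the variable $z$, and then to identify that coefficient with $f_{k,m}$ by matching principal parts. Write $R(\tau,z)=\dfrac{f_{k,-\ell}(z)\,f_{2-k,1+\ell}(\tau)}{j(\tau)-j(z)}$. First I would record that $2-k$ is the ``dual'' weight: from $k=12\ell+k'$ we get $2-k=12(-\ell-1)+(14-k')$ with $14-k'\in\{0,4,6,8,10,14\}$, so in weight $2-k$ the dimension parameter is $-\ell-1$ and $f_{2-k,1+\ell}$ is exactly the bottom basis element, with $f_{2-k,1+\ell}(\tau)=q^{-(1+\ell)}+O(q^{-\ell})$. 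Since $j$ has weight $0$, the factor $f_{k,-\ell}(z)/(j(\tau)-j(z))$ has weight $k$ in $z$, and multiplying by the $z$-independent $f_{2-k,1+\ell}(\tau)$ preserves this; expanding $1/(j(\tau)-j(z))=\sum_{i\ge 1}A_i(j(z))\,q^{i}$, where each $A_i$ is a monic polynomial of degree $i-1$, I would conclude that the coefficient $g_m(z)$ of $q^m$ in $R$ equals $f_{k,-\ell}(z)$ times a polynomial in $j(z)$, hence lies in $M_k^!$, and that the expansion of $R$ begins at $q^{-\ell}$.

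Next I would pin down the principal part of $g_m$ in $q_z=e^{2\pi i z}$. Using $f_{k,-\ell}(z)=q_z^{\ell}+O(q_z^{\ell+1})$, $A_i(j(z))=q_z^{-(i-1)}+\cdots$, and the leading term $q^{-(1+\ell)}$ of $f_{2-k,1+\ell}$, the most negative power of $q_z$ in $g_m$ comes from the terms with $i+j=m$ that maximize $i$, and a short computation gives $g_m(z)=q_z^{-m}+\cdots$ with leading coefficient $1$. By the uniqueness that follows from the valence formula (\ref{valence}), it then remains only to show that the coefficient of $q_z^{n}$ in $g_m$ vanishes for $-m<n\le\ell$; once this is known, $g_m=q_z^{-m}+O(q_z^{\ell+1})=f_{k,m}$ and the theorem follows.

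The engine for this vanishing is a duality between the Fourier coefficients in the weights $k$ and $2-k$, namely $a_k(m,n)=-a_{2-k}(n,m)$ for $m\ge-\ell$ and $n\ge\ell+1$. To prove it, I would note that $f_{k,m}\,f_{2-k,n}\in M_2^!$, and that any weight-$2$ weakly holomorphic form $h$ on $\SL_2(\Z)$ has vanishing constant term: the differential $h\,d\tau$ descends to a meromorphic $1$-form on $X(1)\cong\mathbb{P}^1$ whose only pole is at the cusp, so by the residue theorem its residue there, a nonzero multiple of the constant term, must vanish. Computing the constant term of $f_{k,m}\,f_{2-k,n}$ directly, the only surviving contributions pair the leading term of one factor with the matching coefficient of the other, giving $a_k(m,n)+a_{2-k}(n,m)=0$; the ranges force $m+n\ge 1$, so the diagonal $m=-n$ never interferes. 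Feeding this duality into the expansion of $R$ (equivalently, computing the $q_z$-expansion of $R$, which begins at $q_z^{\ell+1}$ and reproduces $-\sum_{n\ge\ell+1}f_{2-k,n}(\tau)\,q_z^{n}$) yields the vanishing of the intermediate coefficients of $g_m$.

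The main obstacle I anticipate is the two-variable bookkeeping. The expansion of $1/(j(\tau)-j(z))$ in powers of $q$ and its expansion in powers of $q_z$ are valid in different regions ($\operatorname{Im}\tau>\operatorname{Im}z$ versus the reverse), so the ``intermediate'' coefficients of $g_m$ cannot be read off by naively comparing the two double series; it is precisely here that the duality relation, rather than a direct term-by-term comparison, is needed. Keeping careful track of which coefficients are forced to vanish by the definition of the basis (the $O(q^{\ell+1})$ and $O(q^{-\ell})$ conditions in the two weights) and which must be supplied by the residue computation is the delicate part of the argument.
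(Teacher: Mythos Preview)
The present paper does not prove this theorem at all: it is quoted from \cite{Duke-Jenkins}, and in the paper's logical order Corollary~\ref{duality} (the duality $a_k(m,n)=-a_{2-k}(n,m)$) is \emph{deduced from} Theorem~\ref{genfn}, not the other way around. So there is no proof here to compare your proposal against.

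Evaluated on its own terms, your plan reverses this flow: you give an independent proof of duality via the vanishing constant term of any element of $M_2^!$, and then try to use duality to establish the generating function. That is a perfectly legitimate strategy, and your residue argument for duality is correct. But the final step, as you have written it, has a gap. You assert that the $q_z$-expansion of $R$ ``reproduces $-\sum_{n\ge\ell+1}f_{2-k,n}(\tau)\,q_z^{n}$''; however, by the symmetry $R_k(\tau,z)=-R_{2-k}(z,\tau)$ what you actually get is $-\sum_{n\ge\ell+1}h_n(\tau)q_z^n$ with $h_n\in M_{2-k}^!$ of leading term $q^{-n}$, and showing $h_n=f_{2-k,n}$ is precisely the weight-$(2-k)$ instance of the statement you are trying to prove. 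The duality you established relates the coefficients of $f_{k,m}$ and $f_{2-k,n}$, not those of $g_m$ and $h_n$, so invoking it at this point is circular. Moreover, the two double expansions of $R$ live on opposite sides of the polar locus $j(\tau)=j(z)$, so their $(q^m q_z^n)$-coefficients cannot simply be matched; you flag this yourself but do not resolve it.

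The fix is to make the connection explicit. One clean way: write $f_{2-k,\ell+1}(\tau)/(j(\tau)-X)=\sum_{m\ge -\ell}P_m(X)q^m$ with $P_m\in\Z[X]$ monic of degree $m+\ell$, and prove $f_{k,-\ell}(z)P_m(j(z))=f_{k,m}(z)$ by induction on $m$. The recursion $P_{m+1}(X)=XP_m(X)-\sum_{i\ge0}c(i)P_{m-i}(X)+a_{2-k}(\ell+1,m)$ yields, after multiplying by $f_{k,-\ell}(z)$ and comparing the coefficients of $q_z^{r}$ for $-m\le r\le\ell$, automatic vanishing for $r<\ell$ and, for $r=\ell$, exactly $a_k(m,\ell+1)+a_{2-k}(\ell+1,m)$, which is zero by your residue-proved duality. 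This closes the loop and makes your approach a genuine alternative to deriving duality from the generating function.
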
  Replacing $k$ with
$2-k$ and switching $\tau$ and $z$, we find the following beautiful
duality of Fourier coefficients.
\begin{corollary}[\cite{Duke-Jenkins}, Corollary 1] \label{duality}
For any even integer $k$ and any integers $m, n$, the equality
\[a_k(m, n) = -a_{2-k}(n, m)\] holds for the Fourier coefficients of
the modular forms $f_{k, m}$ and $f_{2-k, n}$.
\end{corollary}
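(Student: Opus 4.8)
The plan is to feed Theorem~\ref{genfn} back into itself with $k$ replaced by $2-k$, and then interchange the two variables. First I would record how the bookkeeping integer $\ell$ changes: writing $k = 12\ell + k'$ with $k' \in \{0,4,6,8,10,14\}$, a short case check gives $2-k = 12(-\ell-1) + (14-k')$, so the analogue of $\ell$ for weight $2-k$ is $-\ell-1$ and the complementary residue is $14-k'$. In particular $f_{2-k,n}$ is defined precisely for $n \geq \ell+1$, and the lowest such form is $f_{2-k,\ell+1} = \Delta^{-\ell-1}E_{14-k'}$.

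Applying Theorem~\ref{genfn} in weight $2-k$ and relabelling so that the summation variable is $z$ and the basis forms are evaluated at $\tau$, I get
\[\sum_{n \geq \ell+1} f_{2-k,n}(\tau)\,e^{2\pi i nz} = \frac{f_{2-k,\ell+1}(\tau)\,f_{k,-\ell}(z)}{j(z)-j(\tau)}.\]
Since the numerator is the same product that appears in Theorem~\ref{genfn} for weight $k$ and the denominators differ only by sign, the two generating functions are negatives of one another. Writing $p = e^{2\pi i z}$ and $q = e^{2\pi i \tau}$ and comparing the coefficient of $q^m p^n$ on the two sides then reads off $a_{2-k}(n,m)$ against $-a_k(m,n)$, which is exactly the asserted duality.

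The subtlety I expect to be the main obstacle is that the two generating functions converge in complementary regions --- $\sum_m f_{k,m}(z)q^m$ for $\Im\tau > \Im z$ and $\sum_n f_{2-k,n}(\tau)p^n$ for $\Im z > \Im\tau$ --- so the coefficient comparison is not literally an identity of formal power series. To make it rigorous I would extract each coefficient as an iterated contour integral of the single meromorphic kernel $f_{k,-\ell}(z)f_{2-k,\ell+1}(\tau)/(j(\tau)-j(z))$; the two admissible orders of integration compute $a_k(m,n)$ and $-a_{2-k}(n,m)$, and switching them introduces only the residue along the diagonal $j(\tau)=j(z)$. That residue is controlled by $f_{k,-\ell}\,f_{2-k,\ell+1}/(dj/dq)$, which collapses dramatically: from $f_{k,-\ell} = \Delta^\ell E_{k'}$, $f_{2-k,\ell+1} = \Delta^{-\ell-1}E_{14-k'}$, the identity $E_{k'}E_{14-k'} = E_{14}$, and $dj/dq = -E_{14}/(q\Delta)$, it equals simply $-q$. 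Its only nonzero Fourier coefficient therefore lies in degree $1$, so the discrepancy $a_k(m,n)+a_{2-k}(n,m)$ vanishes whenever $m+n \neq 0$; this covers the whole range $m \geq -\ell$, $n \geq \ell+1$ where the coefficients can be nonzero, and the remaining cases follow at once from the conventions $a_k(m,n)=0$ for $m < -\ell$ or $n \leq \ell$ and their weight-$(2-k)$ analogues.
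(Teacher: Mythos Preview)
Your argument is correct and follows exactly the route the paper takes: replace $k$ by $2-k$ in Theorem~\ref{genfn}, swap $\tau$ and $z$, observe that the kernel changes only by a sign, and read off the coefficient identity. The paper's derivation is the single sentence ``Replacing $k$ with $2-k$ and switching $\tau$ and $z$, we find the following beautiful duality of Fourier coefficients,'' so your third paragraph---the contour-integral justification that the two expansions, valid in complementary half-planes, really do have matching coefficients, with the diagonal residue collapsing to $-q$ via $E_{k'}E_{14-k'}=E_{14}$ and $q\,dj/dq=-E_{14}/\Delta$---supplies analytic rigor that the paper (and the cited Duke--Jenkins paper) leaves implicit.
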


We note that these bases for $M_{k}^!$ closely parallel those of
half integral weight defined by Zagier in his work on traces of
singular moduli~\cite{Zagier}.  For the weights $1/2$ and $3/2$,
Zagier constructed modular forms on $\Gamma_0(4)$ satisfying a plus
space condition and with Fourier expansions \[\begin{array}{rl} f_d
=
q^{-d} + {O}(q) \in M_{1/2}^!  & (0 \leq d \equiv 0, 1 \pmod{4}), \\
g_D = q^{-D} + {O}(1) \in M_{3/2}^!  & (0 \leq D \equiv 0, 3
\pmod{4}).\end{array}\]  These modular forms have a generating
function and satisfy a duality theorem very similar to
Theorem~\ref{genfn} and Corollary~\ref{duality}, and bases with
similar properties for all half integral weights are constructed
in~\cite{Duke-Jenkins2}.

The Fourier coefficients of Zagier's $f_d$ and $g_D$ can be
interpreted as traces and twisted traces of singular moduli, and
have been widely studied. For instance, Ahlgren and
Ono~\cite{Ahlgren-Ono} proved many congruences for these traces (and
their associated half integral weight Fourier coefficients) modulo
$p^s$, and gave an elementary argument that if $p$ splits in
$\mathbb{Q}(\sqrt{-d})$, then $\textrm{Tr}(p^2 d)$ is congruent to
$0 \pmod{p}$. Edixhoven~\cite{Edixhoven} extended their observation,
proving that if $\leg{-d}{p} = 1$, then $\textrm{Tr}(p^{2n} d)
\equiv 0 \pmod{p^n}$.  An elementary proof of this result using
Hecke operators was given by the second author in~\cite{Jenkins},
and Boylan~\cite{Boylan} exactly computed $\textrm{Tr}(2^{2n} d)$,
obtaining even stronger congruences and divisibility results for $p
= 2$.

With these congruences for Fourier coefficients of forms of half
integral weight as a model, it is natural to ask whether similar
divisibility results exist for the Fourier coefficients $a_k(m, n)$
of the integral weight basis elements $f_{k, m}$. A result similar
to Edixhoven's appears in~\cite{Duke-Jenkins}, where it is proved
that for the weights $k = 4, 6, 8, 10$, and $14$, if $p^r | n$ and
$p \nmid m$, it is true that $p^{(k-1)r} | a_k(m, n)$. Thus, if $(m,
n) = 1$, we have $n^{k-1} | a_k(m, n)$.  This theorem seems to be
sharp if $p > 7$. However, looking at the divisibility of these
Fourier coefficients by smaller primes, it appears that the $a_k(m,
n)$ are divisible by higher powers of these primes.  For instance,
certain coefficients of \[f_{4, 1}(\tau) = E_4(\tau)(j(\tau)-984) =
q^{-1} + \sum_{n \geq 1} a_4(1, n) q^n\] factor in the following
way.
\[\begin{array}{rcl}
a_4(1, 2)&=& 2^{10}\cdot 5 \cdot 13327 \\
a_4(1, 4)&=& 2^{13} \cdot 3^2 \cdot 13 \cdot 113 \cdot 2543 \\
a_4(1, 5)&=& 2^3 \cdot 5^4 \cdot 19 \cdot 9931 \cdot 7639 \\
a_4(1, 8)&=& 2^{16} \cdot 3 \cdot 5^2 \cdot 293 \cdot
15918317 \\
a_4(1, 10)&=& 2^{11} \cdot 3^6 \cdot 5^4 \cdot 2184176461 \\
a_4(1, 20)&=& 2^{14} \cdot 5^4 \cdot 29243 \cdot 235531684534847 \\
a_4(1, 25)&=& 2^2 \cdot 3^2 \cdot 5^7 \cdot 11491 \cdot 102481 \cdot
4609259 \cdot 4679867
\end{array}\]

While Theorem 3 in~\cite{Duke-Jenkins} predicts that if $p^r \| n$,
then $p^{3r} | a_4(1, n)$, we see that in fact there are extra
powers of $2$ and $5$ dividing these coefficients.  For instance, we
expect to see that $2^3 | a_4(1, 2)$ and $5^3 | a_4(1, 5)$, while in
fact the stronger divisibility results $2^{10} | a_4(1, 2)$ and $5^4
| a_4(1, 5)$ are actually true.  Further computation reveals similar
divisibility by small primes for other modular forms of these
weights.

In this paper, we will prove the following theorem making these
divisibility results more explicit.  For an integer $N$, let $v_p(N)$ be the
$p$-adic valuation of $N$, or the largest integer $s$ such that $p^s
| N$.
\begin{theorem}\label{main} Let $k\in\{4,6,8,10,14\}$ and let $p\in\{2,3,5\}$.
Then for
$$\epsilon_{k,p}=\begin{cases}
7&\text{if $p=2$, $k=4,6,14$,}\cr
8&\text{if $p=2$, $k=8,10$,}\cr
2&\text{if $p=3$, $k=4,10$,}\cr
3&\text{if $p=3$, $k=6,8,14$,}\cr
1&\text{if $p=5$,}
\end{cases}$$
we have for all $m,n>0$,

$$v_p(a_k(m,n))\geq\begin{cases}
\epsilon_{k,p}&\text{if $v_p(m)>v_p(n)$},\cr
(v_p(n)-v_p(m))(k-1)+\epsilon_{k,p}&\text{if $v_p(n)>v_p(m)$.}\cr
\end{cases}$$

\end{theorem}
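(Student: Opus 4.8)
The plan is to reduce both inequalities, by means of a Hecke-operator recursion, to a single congruence satisfied by the forms $f_{k,m}$ with $p\mid m$, and then to establish that congruence by an explicit $p$-adic analysis of the generating data $E_{k'}$, $\Delta$, and $j$.

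First I would record the action of the weight-$k$ Hecke operator $T_p$, which sends $\sum a(n)q^{n}$ to $\sum\bigl(a(pn)+p^{k-1}a(n/p)\bigr)q^{n}$. Since every weight in question has $\ell=0$, there are no cusp forms and a form of $M_k^{!}$ is determined by its principal part; reading off the poles of $f_{k,m}\mid T_p$ then gives
\[f_{k,m}\mid T_p=p^{k-1}f_{k,pm}+f_{k,m/p},\]
where the final term is present only when $p\mid m$. Comparing the coefficient of $q^{n}$ yields the coefficient identity
\[a_k(m,pn)+p^{k-1}a_k(m,n/p)=p^{k-1}a_k(pm,n)+a_k(m/p,n),\]
with the convention that terms of non-integral index vanish and that $a_k(m/p,n)$ occurs only for $p\mid m$. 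Writing $U_p,V_p$ for the operators $\sum a(n)q^{n}\mapsto\sum a(pn)q^{n}$ and $\sum a(n)q^{n}\mapsto\sum a(n)q^{pn}$, so that $T_p=U_p+p^{k-1}V_p$, a short telescoping induction from the displayed relation gives, for $p\nmid m$ and $r\ge 1$,
\[f_{k,m}\mid U_p^{\,r}=p^{r(k-1)}\bigl(f_{k,p^{r}m}-V_p\,f_{k,p^{r-1}m}\bigr).\]
Taking the coefficient of $q^{n}$ with $p\nmid n$, where the $V_p$-term contributes nothing, produces $a_k(m,p^{r}n)=p^{r(k-1)}a_k(p^{r}m,n)$.

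This last relation drives the reduction. It shows that for $p\nmid m$ and $v_p(n)=r\ge 1$ one has $v_p(a_k(m,n))=r(k-1)+v_p(a_k(p^{r}m,n_0))$, where $n=p^{r}n_0$ with $p\nmid n_0$; thus the growth term $r(k-1)$ is automatic, and the case $v_p(n)>v_p(m)=0$ is \emph{equivalent} to the case $v_p(m)>v_p(n)=0$. The remaining coefficients, in which both $v_p(m)$ and $v_p(n)$ are positive, I would treat by a further induction that always strips a factor of $p$ from whichever of $m,n$ carries the \emph{smaller} valuation: one substitutes $n\mapsto n/p$ (to lower $v_p(n)$) or $m\mapsto m/p$ (to lower $v_p(m)$) in the coefficient identity above and solves for $a_k(m,n)$, checking termwise that the claimed bound is reproduced. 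The essential bookkeeping point is that lowering the smaller valuation keeps the difference $v_p(n)-v_p(m)$ of constant sign, so the induction never meets the excluded case $v_p(m)=v_p(n)$; this case must be avoided, since the conclusion genuinely fails there --- for instance $v_2(a_4(1,1))=2<7$. (The resulting relations are consistent with the duality $a_k(m,n)=-a_{2-k}(n,m)$ of Corollary~\ref{duality}.) All told, both inequalities reduce to the single assertion that, for $p\mid M$ and $p\nmid n$, one has $v_p(a_k(M,n))\ge\epsilon_{k,p}$; equivalently, that $f_{k,M}$ is congruent modulo $p^{\epsilon_{k,p}}$ to a power series in $q^{p}$.

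I expect this base case to be the main obstacle, since the reduction above is formal and merely reproduces the $(k-1)$-growth already established in Theorem~3 of~\cite{Duke-Jenkins}; the extra divisibility $\epsilon_{k,p}$ is genuinely arithmetic. To obtain it I would analyze $f_{k,M}=\Delta^{\ell}E_{k'}F_{k,D}(j)$ modulo $p^{\epsilon_{k,p}}$ for each of the finitely many pairs $(k,p)$, using the congruence $E_{k'}\equiv 1$ modulo the relevant power of $p$ coming from the numerators of its Fourier coefficients, together with Ramanujan- and Lehner-type congruences for the coefficients of $\Delta$ and of $j$ modulo powers of $2$, $3$, and $5$. These inputs should force every coefficient of $f_{k,M}$ at an index prime to $p$ to vanish modulo $p^{\epsilon_{k,p}}$ whenever $p\mid M$, with the best attainable exponent being exactly the tabulated $\epsilon_{k,p}$; the forms $f_{k,M}$ for different $M$ can be tied together either by an induction on $M$ or by the Hecke operators $T_{\ell}$ with $\ell\ne p$, which leave all the relevant $p$-adic valuations unchanged.
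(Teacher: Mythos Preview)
Your reduction to the single base case --- that $v_p(a_k(M,n))\ge\epsilon_{k,p}$ whenever $p\mid M$ and $p\nmid n$ --- is correct and is exactly what the paper does in Section~3 (via the same Hecke identity, packaged there as Corollary~\ref{induction}). So the first half of your proposal matches the paper.

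The gap is in your treatment of the base case itself. Your plan is to read the congruence off the factorization $f_{k,M}=E_k\cdot F_{k,M}(j)$ using $E_k\equiv 1$ and Lehner-style congruences for $j$, and then to propagate between values of $M$ via $T_\ell$ with $\ell\ne p$. Two problems. First, the numbers do not cooperate: for $p=2$, $k=4$ you need $\epsilon_{4,2}=7$, but $E_4\equiv 1$ only modulo $2^4$, and Lehner's results concern the coefficients $c(p^a n)$ of $j$ itself, not the convolution-type coefficients of the polynomials $F_{k,M}(j)$, whose integer coefficients depend on $M$ in an uncontrolled way. Second, your linking mechanism via $T_\ell$ ($\ell\ne p$) lets you pass between $M$ and $\ell M$ but never collapses the infinite family $M=p,p^2,p^3,\ldots$ to a finite check; and an ``induction on $M$'' based on $f_{k,M}=j\cdot f_{k,M-1}-\cdots$ runs through values of $M$ prime to $p$, where there is no congruence to exploit. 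As written, nothing reduces the base case to a finite verifiable computation.

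The paper's solution to precisely this difficulty is to work on $\Gamma_0(p)$. One forms $g=f_{k,mp^s}-V_p f_{k,mp^{s-1}}\in M_k^!(p)$ and shows (Lemma~\ref{congruence}, via the Fricke involution and the relation $\psi(-1/p\tau)=p^{\lambda/2}\Phi(\tau)$) that $g$ is congruent modulo $p^{\lambda/2}$ to a \emph{holomorphic} form in $M_k(p)$. An integral basis for the finite-dimensional space $M_k(p)$ (Lemma~\ref{integralbasis}) then reduces everything to the first $d=\dim M_k(p)$ coefficients of $g$. Duality (Corollary~\ref{duality}) converts this into verifying that a fixed finite list of negative-weight forms $f_{2-k,j}\mid U_p$, $1\le j\le d-1$, are congruent to constants modulo $p^{\epsilon_{k,p}}$ (Theorem~\ref{forms-to-test}); those are checked by explicit computation using the level-$p$ building blocks $\theta_{2-k,p}$, $\alpha_{2-k,p}$, $\Phi_p$ constructed in Section~6. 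The passage to level $p$ and the resulting finite-dimensionality is the idea your sketch is missing.
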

We note that the theorem makes no prediction about divisibility if
$v_p(m)=v_p(n)$.  If $(m, 30) = 1$, the theorem implies that for $a,
b, c > 0$, \[ a_k(m, 2^a 3^b 5^c n) \equiv 0 \pmod{2^{(k-1)a + 7}
3^{(k-1)b + 2} 5^{(k-1)c + 1}}.\]

Because any weakly holomorphic modular form of weight $k \in
\{4,6,8,10,14\}$ is a linear combination of the $f_{k,m}(\tau)$ and
the Eisenstein series $E_k(\tau)$, we can use this result to derive
divisibility results for many coefficients of more general weakly
holomorphic modular forms.  For instance, if
\[f(\tau)=c_{-2}q^{-2}+c_{-1}q^{-1}+\sum_{n>0}
c_nq^n=c_{-2}f_{4,2}(\tau)+c_{-1}f_{4,1}(\tau)\] is any modular form
of weight $4$ with integer coefficients, constant coefficient 0, and
a pole of order $2$ at the cusp, we can deduce easily that for
$3|n$, $v_3(c_n)\geq 3v_3(n)+2$.

\section{Definitions}
In this section, we define various modular forms and operators that will be used
throughout the paper.
\subsection{Modular forms derived from Eisenstein series}

We define the standard normalized Eisenstein series of even weight
$k\geq 4$ as
$$E_k(\tau)=1+A_k\sum_{n=1}^\infty\sigma_{k-1}(n)q^n,$$
where $A_k=-2k/B_k$ and $B_k$ is the $k$th Bernoulli number.
In addition, we will need the forms
$$S_{k,p}(\tau)=\frac{E_k(\tau)-E_k(p\tau)}{A_k},\qquad \text{and}\qquad T_{k,p}(\tau)=
\frac{p^kE_k(p\tau)-E_k(\tau)}{p^k-1}.$$
For $k\in\{4,6,8,10,14\}$ and any $p$, it is well known that $A_k$
is an integer and it is  easily seen that $S_{k,p}$ has integral
Fourier coefficients.  For $k\in\{4,6\}$ and $p=2$ and for $k=4$ and
$p=3$, one checks easily that $T_{k,p}$ has integer coefficients.
Often, when $p$ is clear from context, we will write $S_k$ and $T_k$
in place of $S_{k,p}$ and $T_{k,p}$.  Note that $S_{k,p}$ and
$T_{k,p}$ are modular forms of weight $k$ and level $p$, with
$S_{k,p}$ vanishing at $\infty$ and $T_{k,p}$ vanishing at 0.

In weight 2, we will need the Eisenstein series
$E_2(\tau)=1-24\sum_{n=1}^\infty\sigma_1(n)q^n$, which,  although
not modular itself, is used to produce the modular form
$$\frac{E_2(\tau)-pE_2(p\tau)}{1-p}$$ of weight 2 and level $p$ \cite[p.
88]{Stein-comp}.

\subsection{Newforms}

In level 2, we use the following notation for the normalized newforms of weight $8$ and $10$:
$$\Xi_8(\tau)=(\eta(\tau)\eta(2\tau))^8\qquad\text{and}\qquad \Xi_{10}(\tau)=S_{4,2}(\tau)T_{6,2}(\tau).$$
In level 3, we use the following notation for the normalized newform
of weight $6$:
$$\Omega_6(\tau)=(\eta(\tau)\eta(3\tau))^6.$$
In level 5, we use the following notation for the normalized
newforms of weights $4$ and $6$:
$$\Lambda_4(\tau)=(\eta(\tau)\eta(5\tau))^4\qquad\text{and}\qquad\Lambda_6(\tau)=\frac{5E_2(5\tau)-E_2(\tau)}4\Lambda_4(\tau).$$

These newforms can all be found in William Stein's online tables of
modular forms~\cite{Stein},  and we check there that the sign of the
Fricke involution is $1$ for $\Xi_8$ and $\Lambda_4$ and $-1$ for
$\Xi_{10}$, $\Omega_6$, and $\Lambda_6$.

\subsection{Weakly holomorphic forms of weight 0}

Following Apostol \cite[pg. 87]{Apostol}, for $p\in\{2,3,5\}$ we define $\lambda=\lambda_p=24/(p-1)$,
$$\Phi(\tau)=\Phi_p(\tau)=\left(\frac{\eta(p\tau)}{\eta(\tau)}\right)^\lambda, $$
and $\psi(\tau)=1/\Phi(\tau)$.  Although $\lambda$, $\Phi$, and
$\psi$ depend on $p$, we often omit this  dependence from the
notation, since it will be clear from context.

We recall from \cite{Apostol} that both $\psi$ and $\Phi$ are in $M_0^!(p)$, both have integer Fourier coefficients,
$\Phi$ has a zero at $\infty$ and a pole at 0, $\psi$ has a pole at
$\infty$ and a zero at 0, and
$$\psi\left(\frac{-1}{p\tau}\right)=p^{\lambda/2}\Phi(\tau).$$

\subsection{Operators on modular forms}

For $p$ a prime, and $f(\tau)=\sum_{n\geq n_0}a_nq^n$ a weakly
holomorphic modular  form, we define the $U_p$ operator by
$$(f|U_p)(\tau)=\sum_{n\geq n_0/p}a_{pn}q^n.$$
An alternative definition of $f|U_p$ that we will find useful is \cite[Thm. 4.5]{Apostol}
$$(f|U_p)(\tau)=\frac{1}{p}\sum_{j=0}^{p-1}f\left(\frac{\tau+j}p\right).$$
If $f$ has weight $k$ and level $N$, then $f|U_p$ is again a weakly
holomorphic modular  form of weight $k$ and level $N$ (if $p|N$) or
level $pN$ (if $p\nmid N$) \cite[Prop. 2.22]{Ono}.

We also define the operators $V_p$ by $(f|V_p)(\tau)=f(p\tau)$ and
the Hecke  operators $T_p$ by $(f|T_p)=(f|U_p)+p^{k-1}(f|V_p)$.
Note that $V_p$ takes modular forms of weight $k$ and level $N$ to
forms of weight $k$ and level $Np$, while $T_p$ preserves both
weight and level \cite[Prop. 2.2, Thm.  4.5]{Ono}.

\section{Reduction to the case where $p|m$, $(n,p)=1$}
In this section, we will prove that if Theorem~\ref{main} is true
for the special case of Fourier coefficients $a_k(m,n)$ with $p|m$
and $(n,p)=1$, then it is true in all cases. We begin by citing a
result of Duke and Jenkins relating certain Fourier coefficients,
from which the divisibility result of \cite{Duke-Jenkins} cited in
the introduction follows immediately.

\begin{proposition}[\cite{Duke-Jenkins}, Lemma 1] Let $p$ be a prime and $k\in\{4,6,8,10,14\}$.  Then for
$m,n,s\in\Z$, with $m,n,s>0$ and $p$ a prime,
$$a_k(m,np^s)=p^{s(k-1)}\left(a_k(mp^s,n)-a_k(mp^{s-1},n/p)\right)+a_k(m/p,np^{s-1}).$$
\end{proposition}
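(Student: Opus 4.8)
The plan is to induct on $s$, deriving the base case $s=1$ from the action of the Hecke operator $T_p$ on the canonical basis and then obtaining the general case by a short manipulation in which two stray terms cancel. For the base case I would first pin down $f_{k,m}|T_p$ exactly in terms of the basis. Since $k\in\{4,6,8,10,14\}$ we have $\ell=0$, so every basis element has the shape $f_{k,j}=q^{-j}+O(q)$ and carries no constant term; consequently a form in $M_k^!$ is determined by its principal part together with its constant coefficient. Writing $(f_{k,m}|T_p)(\tau)=\sum_n\bigl(a_k(m,pn)+p^{k-1}a_k(m,n/p)\bigr)q^n$, the only negative-exponent contributions come from $pn=-m$ and from $n/p=-m$, so the principal part of $f_{k,m}|T_p$ is $p^{k-1}q^{-pm}+q^{-m/p}$ (the second term present only when $p\mid m$), while its constant term is $a_k(m,0)+p^{k-1}a_k(m,0)=0$.

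Matching principal parts and constant terms, and recalling that $T_p$ preserves weight and level, I conclude the clean identity
\[f_{k,m}|T_p=p^{k-1}f_{k,pm}+f_{k,m/p},\]
with the convention that $f_{k,m/p}=0$ unless $p\mid m$. Comparing the coefficient of $q^n$ on both sides immediately yields $a_k(m,pn)+p^{k-1}a_k(m,n/p)=p^{k-1}a_k(pm,n)+a_k(m/p,n)$, which rearranges to the $s=1$ case
\[a_k(m,np)=p^{k-1}\bigl(a_k(pm,n)-a_k(m,n/p)\bigr)+a_k(m/p,n).\]

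For the inductive step I would assume the statement for $s-1$ (for all $m,n$) and apply the $s=1$ identity with $n$ replaced by $np^{s-1}$, obtaining $a_k(m,np^s)=p^{k-1}\bigl(a_k(pm,np^{s-1})-a_k(m,np^{s-2})\bigr)+a_k(m/p,np^{s-1})$. Next I would expand $a_k(pm,np^{s-1})$ using the inductive hypothesis with pole order $pm$, which produces $p^{(s-1)(k-1)}\bigl(a_k(mp^{s},n)-a_k(mp^{s-1},n/p)\bigr)+a_k(m,np^{s-2})$. Substituting this back, the two copies of $a_k(m,np^{s-2})$ cancel and the powers of $p$ combine as $p^{k-1}p^{(s-1)(k-1)}=p^{s(k-1)}$, giving exactly the desired formula. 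Throughout, the conventions that $a_k(m,n)=0$ for non-integral or out-of-range indices make all degenerate cases (such as $p\nmid m$) automatic.

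The main obstacle is the base case, specifically justifying that $f_{k,m}|T_p$ has no holomorphic correction term: a priori the basis expansion of $f_{k,m}|T_p$ could include multiples of $f_{k,-j}$ with $0\le j\le\ell$. The decisive point is that $\ell=0$ for every weight in $\{4,6,8,10,14\}$, so the only such index is $j=0$ and its coefficient is forced to vanish because $f_{k,m}$ has no constant term. Once this clean two-term Hecke formula is in hand, everything else is routine coefficient bookkeeping and the telescoping induction.
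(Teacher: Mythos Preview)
Your argument is correct. Note that the paper does not actually supply a proof of this proposition: it is quoted as \cite{Duke-Jenkins}, Lemma~1, and the paper simply cites it before applying induction to obtain Corollary~\ref{induction}. So there is no in-paper proof to compare against.

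That said, your approach is the natural one and is essentially the argument in the cited source. The key identity $f_{k,m}|T_p = p^{k-1}f_{k,pm} + f_{k,m/p}$ follows exactly as you describe: because $k\in\{4,6,8,10,14\}$ forces $\ell=0$, the space $M_k$ is one-dimensional, spanned by $f_{k,0}=E_k$, and the vanishing of the constant term of $f_{k,m}|T_p$ (since $a_k(m,0)=0$ for $m>0$) kills any possible $E_k$-contribution. Your handling of the principal part is accurate, and the induction with the telescoping cancellation of $a_k(m,np^{s-2})$ is clean and correct. The conventions on $a_k$ for non-integral or out-of-range arguments, as stated in the paper, do indeed make the boundary cases automatic.
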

\noindent Applying induction to this proposition, we obtain the
following.
\begin{corollary}\label{induction}
Let $(m,p)=(n,p)=1$, $r,s\geq 0$, $k\in\{4,6,8,10,14\}$.  Then for $0\leq t\leq \min(r,s-1)$,
$$a_k(mp^r,np^s)=a_k(mp^{r-t-1},np^{s-t-1}) + \sum_{j=0}^tp^{(s-j)(k-1)}a_k(mp^{r+s-2j},n).$$
\end{corollary}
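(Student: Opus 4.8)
The plan is to prove the identity by induction on $t$, viewing the Proposition as a one-step recursion whose $t$-fold iterate is exactly the claimed formula. The single most useful observation is that since $(n,p)=1$, the number $n/p$ is never an integer, so every term of the shape $a_k(\,\cdot\,,\,n/p)$ that the Proposition produces vanishes identically, because $a_k(m,n)=0$ whenever $m$ or $n$ fails to be an integer. This collapses the three-term recursion of the Proposition into a clean two-term recursion whenever its first two arguments are powers of $p$ times $m$ and $n$ respectively, and it is this two-term recursion that I would telescope.

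For the base case $t=0$ I would apply the Proposition directly with $m$ replaced by $mp^r$ and the exponent kept as $s$, obtaining
\[a_k(mp^r,np^s)=p^{s(k-1)}\bigl(a_k(mp^{r+s},n)-a_k(mp^{r+s-1},n/p)\bigr)+a_k(mp^{r-1},np^{s-1}).\]
The middle term drops out by the vanishing observation above, leaving precisely the $t=0$ instance $a_k(mp^r,np^s)=a_k(mp^{r-1},np^{s-1})+p^{s(k-1)}a_k(mp^{r+s},n)$. Here the hypotheses of the Proposition hold because $t=0\le\min(r,s-1)$ forces $s\geq 1$, so the exponent $s$ is a positive integer and $mp^r$ is a positive integer.

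For the inductive step, assuming the formula for some $t$ with $t<\min(r,s-1)$, I would apply the Proposition to the leftover term $a_k(mp^{r-t-1},np^{s-t-1})$, this time taking $m\mapsto mp^{r-t-1}$ and exponent $s-t-1$. Discarding the vanishing $n/p$ term rewrites that leftover as $a_k(mp^{r-t-2},np^{s-t-2})+p^{(s-t-1)(k-1)}a_k(mp^{r+s-2t-2},n)$. The bookkeeping is then to recognize $r-t-2=r-(t+1)-1$, $s-t-2=s-(t+1)-1$, $s-t-1=s-(t+1)$, and $r+s-2t-2=r+s-2(t+1)$, so that the new power-of-$p$ term is exactly the $j=t+1$ summand; it joins the existing sum while the remaining piece becomes the leftover for index $t+1$, completing the step. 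Matching these shifted exponents to the summation variable $j$ is the only delicate part of the computation.

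The main obstacle, such as it is, is not any hard estimate but confirming that each invocation of the Proposition is legitimate, i.e. that its hypotheses $m,s>0$ are met at every stage. The bound $0\le t\le\min(r,s-1)$ does all the work: during the inductive step $t<\min(r,s-1)$ gives $r-t-1\geq 0$, so the first argument fed to the Proposition stays a nonnegative power of $p$ times $m$, and it gives $s-t-1\geq 1>0$, so the exponent remains a positive integer. (When $r-t-1=0$ the resulting leftover $a_k(m/p,\,\cdot\,)$ is simply zero, matching the corresponding zero term in the $t+1$ formula, so no separate case is needed.) I would verify these inequalities explicitly at the point of application and then let the telescoping of indices finish the proof.
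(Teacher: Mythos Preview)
Your proof is correct and follows exactly the approach the paper intends: the paper simply writes ``Applying induction to this proposition, we obtain the following,'' and your argument is precisely that induction carried out in full, including the key observation that the $a_k(\,\cdot\,,n/p)$ terms vanish because $(n,p)=1$. Your careful check that the Proposition's positivity hypotheses remain valid at every step, driven by the bound $t<\min(r,s-1)$, is the only content beyond bookkeeping, and you handle it correctly.
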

\noindent Applying this corollary, we obtain the following
reduction.
\begin{theorem} Let $p\in\{2,3,5\}$ and let $k\in\{4,6,8,10,12\}$.  Assume that
$v_p(a_k(a,b))\geq \epsilon_{k,p}$ for all $a,b>0$ having $p|a$ and
$p\nmid b$. Let $m,n>0$ be relatively prime to $p$ and $r,s\geq 0$.  Then if $r>s$,
$$v_p(a_k(mp^r,np^s))\geq \epsilon_{k,p},$$
and if $r<s$,
$$v_p(a_k(mp^r,np^s))\geq (s-r)(k-1)+\epsilon_{k,p}.$$
\end{theorem}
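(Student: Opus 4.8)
The plan is to apply Corollary~\ref{induction} with an optimally chosen value of $t$ in each of the two cases, and then to bound the $p$-adic valuation of the resulting expression term by term, using the ultrametric inequality $v_p\!\left(\sum_i x_i\right)\geq \min_i v_p(x_i)$. Since all of the substantive arithmetic has already been packaged into Corollary~\ref{induction}, the work here is purely a matter of choosing $t$ well and tracking valuations.

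First I would treat the case $r<s$. Here $\min(r,s-1)=r$, so I may take $t=r$ in Corollary~\ref{induction}. The leading term then becomes $a_k(mp^{-1},np^{s-r-1})$; because $(m,p)=1$, the first argument $mp^{-1}$ is not an integer, and recall that $a_k(m,n)=0$ whenever $m$ is not an integer, so this term vanishes. What remains is
$$a_k(mp^r,np^s)=\sum_{j=0}^{r}p^{(s-j)(k-1)}\,a_k(mp^{r+s-2j},n).$$
For each $j$ in this range the exponent $r+s-2j$ is at least $r+s-2r=s-r\geq 1$, so $p\mid mp^{r+s-2j}$ while $(n,p)=1$; the hypothesis then gives $v_p\!\left(a_k(mp^{r+s-2j},n)\right)\geq\epsilon_{k,p}$. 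Hence the $j$-th summand has valuation at least $(s-j)(k-1)+\epsilon_{k,p}$, and since $k-1>0$ this is minimized at $j=r$, yielding $v_p(a_k(mp^r,np^s))\geq (s-r)(k-1)+\epsilon_{k,p}$.

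Next I would handle $r>s$. If $s=0$, then $p\mid mp^r$ and $(n,p)=1$, so $v_p(a_k(mp^r,n))\geq\epsilon_{k,p}$ is immediate from the hypothesis. If $s\geq 1$, then $\min(r,s-1)=s-1$, and I take $t=s-1$. The leading term becomes $a_k(mp^{r-s},n)$; since $r-s\geq 1$ we again have $p\mid mp^{r-s}$ and $(n,p)=1$, so its valuation is at least $\epsilon_{k,p}$. Every remaining summand carries a factor $p^{(s-j)(k-1)}$ with $s-j\geq 1$ together with an $a_k$-coefficient of valuation at least $\epsilon_{k,p}$ (by the same exponent check as above), hence valuation at least $(k-1)+\epsilon_{k,p}>\epsilon_{k,p}$. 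The minimum over all terms is therefore $\epsilon_{k,p}$, giving $v_p(a_k(mp^r,np^s))\geq\epsilon_{k,p}$.

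I expect the only delicate points to be organizational rather than conceptual: confirming that in each summand $a_k(mp^{r+s-2j},n)$ the first index really is divisible by $p$ so that the hypothesis for the case $p\mid a$, $p\nmid b$ applies; correctly identifying the index $j$ that minimizes the valuation; and disposing of the boundary cases $s=0$ and $r=0$. No further input about the coefficients $a_k(m,n)$ beyond Corollary~\ref{induction} and the vanishing of $a_k$ on non-integral arguments should be needed.
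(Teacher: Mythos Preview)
Your proof is correct and follows essentially the same approach as the paper: for $r<s$ you apply Corollary~\ref{induction} with $t=r$ so the term outside the sum vanishes, and for $r>s$ you apply it with $t=s-1$ so every term is visibly divisible by $p^{\epsilon_{k,p}}$. Your treatment is in fact slightly more careful than the paper's, since you separate out the boundary case $s=0$ (where $t=s-1$ would be negative and Corollary~\ref{induction} does not directly apply) and explicitly verify that the exponent $r+s-2j$ is positive in each summand.
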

\begin{proof} For $r>s$, we apply Corollary~\ref{induction} with $t=s-1$ to $a_k(mp^r,np^s)$,
and note that each term obtained is divisible by $p^{\epsilon_{k,p}}$.

For $r<s$, we apply Corollary~\ref{induction} with $t=r$.  In this
case, we note that the term  outside the sum vanishes, and each term
inside the sum is divisible by $p^{(s-r)(k-1)+\epsilon_{k,p}}$.
\end{proof}

Hence, we see that in order to prove Theorem~\ref{main}, we need only prove it for $a_k(m,n)$ when $p|m$ and $p\nmid n$.

\section{Poles at zero of weakly holomorphic modular forms}

In this section, we derive a formula which will allow us to compute
the Fourier expansion at 0 of $f|U_p$, where $f$ is a weakly
holomorphic modular form for which we know the Fourier expansion at
$\infty$.  This will allow us to use the expansion at 0 of $f|U_p$
to obtain information about the Fourier coefficients in the
expansion of $f$ at $\infty$.

\begin{lemma}\label{poles}
Let $f$ be a meromorphic modular form of weight $k$ on $SL_2(\Z)$,
and let $f_p=f|U_p$.  Then
$$p\tau^{-k}f_p(-1/\tau)=-f(\tau/p)+pf_p(\tau)+p^kf(p\tau).$$
\end{lemma}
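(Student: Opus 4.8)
The plan is to avoid unwinding the $p$-term definition of $U_p$ directly, and instead route through the Hecke operator $T_p=U_p+p^{k-1}V_p$. The crucial input is that $T_p$ sends weight-$k$ automorphic functions on $\SL_2(\Z)$ to weight-$k$ automorphic functions on $\SL_2(\Z)$; this level-preservation is cited in the excerpt for weakly holomorphic forms, and the same coset-representative computation applies verbatim to our meromorphic $f$, since the $\SL_2(\Z)$-transformation law is insensitive to poles in the upper half plane. Consequently $f|T_p$ is modular of weight $k$ on the full modular group, so in particular it is invariant under $S=\left(\begin{smallmatrix}0&-1\\1&0\end{smallmatrix}\right)$, giving $(f|T_p)(-1/\tau)=\tau^k(f|T_p)(\tau)$.

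First I would write $f_p=f|U_p=(f|T_p)-p^{k-1}(f|V_p)$ and substitute this into the $S$-transformation above, which reduces everything to understanding how $f|V_p$ behaves under $\tau\mapsto -1/\tau$. Since $(f|V_p)(\tau)=f(p\tau)$, I compute $(f|V_p)(-1/\tau)=f(-p/\tau)=f\bigl(-1/(\tau/p)\bigr)$ and then apply the weight-$k$ modularity of $f$ under $S$ (that is, $f(-1/w)=w^k f(w)$ with $w=\tau/p$) to obtain $(f|V_p)(-1/\tau)=(\tau/p)^k f(\tau/p)=p^{-k}\tau^k f(\tau/p)$. Feeding these into $(f|T_p)(-1/\tau)=\tau^k(f|T_p)(\tau)$ produces $f_p(-1/\tau)+p^{-1}\tau^k f(\tau/p)=\tau^k f_p(\tau)+p^{k-1}\tau^k f(p\tau)$; multiplying through by $p\tau^{-k}$ and rearranging yields exactly $p\tau^{-k}f_p(-1/\tau)=-f(\tau/p)+pf_p(\tau)+p^k f(p\tau)$.

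The mathematical content is light: the only genuine step is the rewriting $f(p\cdot(-1/\tau))=f\bigl(-1/(\tau/p)\bigr)$, so that the weight-$k$ law can be applied to the single fractional-linear transformation $S$. I expect the main obstacle to be purely bookkeeping --- keeping the automorphy factors $p^{k-1}$, $p^{-k}$, and $\tau^k$ straight so that the powers of $p$ collapse correctly to $-1$, $p$, and $p^k$ on the three terms --- together with confirming that $T_p$ really preserves level $1$ in the meromorphic setting. A self-contained alternative, should one wish to avoid citing the level-preservation of $T_p$, is to expand $f_p(-1/\tau)=\frac1p\sum_{j=0}^{p-1}f\bigl((j\tau-1)/(p\tau)\bigr)$, treat the $j=0$ term via $S$ directly, and decompose each determinant-$p$ matrix $\left(\begin{smallmatrix}j&-1\\p&0\end{smallmatrix}\right)$ for $1\le j\le p-1$ as an $\SL_2(\Z)$ matrix times an upper-triangular matrix; but the $T_p$ route above is shorter, and I would present that.
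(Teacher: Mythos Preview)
Your argument is correct, and the algebra checks out: from $(f|T_p)(-1/\tau)=\tau^k(f|T_p)(\tau)$ and $(f|V_p)(-1/\tau)=p^{-k}\tau^k f(\tau/p)$ one gets $f_p(-1/\tau)+p^{-1}\tau^k f(\tau/p)=\tau^k f_p(\tau)+p^{k-1}\tau^k f(p\tau)$, and multiplying by $p\tau^{-k}$ gives exactly the desired identity.

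However, your route genuinely differs from the paper's. The paper takes precisely the ``self-contained alternative'' you describe at the end: it expands $p\tau^{-k}f_p(-1/\tau)=\tau^{-k}\sum_{j=0}^{p-1}f\bigl((j\tau-1)/(p\tau)\bigr)$, handles the $j=0$ term with $S$ directly to produce $p^k f(p\tau)$, and for $1\le j\le p-1$ factors $\left(\begin{smallmatrix}j&-1\\p&0\end{smallmatrix}\right)$ as an $\SL_2(\Z)$ matrix times $\left(\begin{smallmatrix}1&-j'\\0&p\end{smallmatrix}\right)$ (with $j'$ the mod-$p$ inverse of $j$), recovering the terms $f\bigl((\tau-j')/p\bigr)$ and hence $-f(\tau/p)+pf_p(\tau)$. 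Your approach is shorter and more conceptual, packaging the coset bookkeeping into the single statement that $T_p$ preserves level $1$; the paper's approach is more elementary and self-contained, not relying on any outside fact about Hecke operators. Both are valid; if you want to match the paper's spirit, you would present your second paragraph as the alternative and your alternative as the main proof.
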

Note the similarity of the lemma to Theorem 4.6 of \cite{Apostol}.
\begin{proof} For an integer $j$ with $1\leq j\leq p-1$, we will denote by $j'$ the unique
integer with $-(p-1)\leq j\leq -1$ such that $jj'\equiv 1\pmod p$, and we will write $b_j=(jj'-1)/p$.  We note that
\begin{align*}
p\tau^{-k}f_p(-1/\tau)&=p\tau^{-k}\frac1p\sum_{j=0}^{p-1}f\left(\frac{-1/\tau+j}{p}\right)\cr
&=\tau^{-k}\sum_{j=0}^{p-1}f\left(\frac{j\tau-1}{p\tau}\right)\cr
&=\tau^{-k}\sum_{j=1}^{p-1}f\left(\begin{pmatrix}j&b\cr
p&j'\end{pmatrix}
\begin{pmatrix}1&-j'\cr0&p\end{pmatrix}\tau\right)+\tau^{-k}f(-1/(p\tau))\cr
&=\tau^{-k}\sum_{j=1}^{p-1}\left(p\left(\frac{\tau-j'}{p}\right)+j'\right)^k
f\left(\frac{\tau-j'}{p}\right)+\tau^{-k}(p\tau)^kf(p\tau))\cr
&=-f(\tau/p)+\sum_{j=0}^\infty
f\left(\frac{\tau+j}{p}\right)+p^kf(p\tau)\cr
&=-f(\tau/p)+pf_p(\tau)+p^kf(p\tau).
\end{align*}
\end{proof}

\begin{corollary} \label{swap}
Let $f$ be a meromorphic modular form of weight $k$ on $\Gamma$, and
let $f_p=f|U_p$.  Then
$$p(p\tau)^{-k}f_p(-1/pt)=-f(\tau)+pf_p(p\tau)+p^kf(p^2\tau).$$
Further, $p(p\tau)^{-k}f_p(-1/pt)$ is modular of weight $k$ and level $p$.
\end{corollary}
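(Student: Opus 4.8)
The plan is to obtain the displayed identity as a direct specialization of Lemma~\ref{poles}, and then to read off the modularity by recognizing the left-hand side as a scalar multiple of the Fricke involute of $f_p$.

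First I would substitute $\tau \mapsto p\tau$ throughout the identity of Lemma~\ref{poles}. Its left-hand side $p\tau^{-k}f_p(-1/\tau)$ becomes $p(p\tau)^{-k}f_p(-1/(p\tau))$, while on the right $-f(\tau/p)$ becomes $-f(\tau)$, the term $pf_p(\tau)$ becomes $pf_p(p\tau)$, and $p^kf(p\tau)$ becomes $p^kf(p^2\tau)$. This yields the claimed formula with no further computation (the argument written $-1/pt$ in the statement should read $-1/(p\tau)$). Note that this step requires $f$ to be modular on $\SL_2(\Z)$, as in the hypothesis of Lemma~\ref{poles}.

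For the modularity assertion, the key observation is that $\tau \mapsto -1/(p\tau)$ is the action of the Fricke matrix $W_p = \begin{pmatrix} 0 & -1 \\ p & 0\end{pmatrix}$. Since $f$ has level $1$ and $p \nmid 1$, the operator $U_p$ produces a form $f_p = f|U_p$ of weight $k$ and level $p$, as recorded in the excerpt. Writing the weight-$k$ slash operator for a matrix of determinant $D$ as $(h|\gamma)(\tau) = D^{k/2}(c\tau+d)^{-k}h(\gamma\tau)$, a short computation gives $(f_p|W_p)(\tau) = p^{k/2}(p\tau)^{-k}f_p(-1/(p\tau))$, so that the left-hand side of the corollary equals $p^{1-k/2}(f_p|W_p)(\tau)$. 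Because $W_p$ normalizes $\Gamma_0(p)$, the Fricke involute of a weight-$k$ level-$p$ form is again a weight-$k$ level-$p$ form; hence $p(p\tau)^{-k}f_p(-1/(p\tau))$ lies in $M_k^!(p)$, which is exactly the modularity claimed.

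The only genuine obstacle is this modularity claim, and it is conceptual rather than computational: the right-hand side $-f(\tau)+pf_p(p\tau)+p^kf(p^2\tau)$ is assembled from pieces that individually live at levels $1$, $p^2$, and $p^2$ (since $V_p$ raises the level by a factor of $p$), so it is not at all transparent that their combination descends to level $p$. Recognizing the left-hand side as a scalar multiple of $f_p|W_p$ sidesteps this issue completely. If one preferred to avoid the Fricke formalism, the alternative would be to verify invariance of the right-hand side directly under a generating set of $\Gamma_0(p)$, but the $V_p$ level-raising makes that bookkeeping delicate, so I would favor the Fricke route.
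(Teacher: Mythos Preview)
Your derivation of the displayed identity is exactly the paper's: both simply replace $\tau$ by $p\tau$ in Lemma~\ref{poles}.

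For the modularity claim you take a genuinely different route. You identify the left-hand side as $p^{1-k/2}(f_p\mid W_p)$ and invoke the fact that the Fricke matrix $W_p$ normalizes $\Gamma_0(p)$, so the involute of the level-$p$ form $f_p=f\mid U_p$ stays at level $p$. The paper instead works on the right-hand side: it observes that
\[
pf_p(p\tau)+p^kf(p^2\tau)=p\bigl((f\mid T_p)\mid V_p\bigr),
\]
and since $T_p$ preserves the level of the level-$1$ form $f$ while $V_p$ then raises it to $p$, this combination is already level $p$; together with the level-$1$ term $-f(\tau)$ this gives the claim. In other words, the very obstacle you flag---that the individual pieces $f_p(p\tau)$ and $f(p^2\tau)$ sit a priori at level $p^2$---is dissolved by repackaging them as $(f\mid T_p)\mid V_p$. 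Your Fricke argument is clean and conceptually natural; the paper's argument has the mild advantage of using only the operators $U_p$, $V_p$, $T_p$ already introduced in the text, without appealing to the normalizer property of $W_p$.
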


\begin{proof}
The equality follows immediately by replacing $\tau$ by $p\tau$ in
Lemma~\ref{poles}.  The statement about the weight and level follows
from the fact that $pf_p(p\tau)+p^kf(p^2\tau)=p((f|T_p)|V_p)$, along with
the fact that $T_p$ preserves weight and level, while $V_p$ raises
levels by a factor of $p$.
\end{proof}

\section{Integral bases for spaces of modular forms}
As seen in the previous section, modular forms on $\Gamma_0(p)$ can
be used to study associated modular forms on $\SL_2(\Z)$. Therefore,
for $p=2, 3, 5$ we now construct integral bases for the spaces
$M_k(p)$, or bases having the property that the modular forms in
$M_k(p)$ with integer coefficients are exactly the integer linear combinations
of the basis elements. These bases allow us to study divisibility
properties of Fourier coefficients by studying the first several
coefficients of a given form.

\begin{lemma}\label{integralbasis} Let $p\in\{2,3,5\}$, let $k \geq 0$ be even, and
let $d=\dim M_k(p)$.  Then there is a basis $\{B_{n,k,p}:0\leq
n<d\}$ of $M_k(p)$ such that each $B_{n,k,p}=q^n+{O}(q^d)$ and each
$B_{n,k,p}$ has integer coefficients.
\end{lemma}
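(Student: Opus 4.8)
The plan is to recast the existence of such a basis as a single statement about an integer lattice, and then to build that lattice explicitly from the eta-quotients already in hand.

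First I would record the injectivity that makes everything finite-dimensional. By the valence formula on $X_0(p)$ (equivalently Sturm's bound) a nonzero form in $M_k(p)$ cannot vanish to order $\geq d$ at $\infty$, since for $p\in\{2,3,5\}$ one checks $d=\dim M_k(p)\leq\lfloor k(p+1)/12\rfloor+1$. Hence the coordinate map $c\colon M_k(p)\to\mathbb{C}^d$ sending $f=\sum a_nq^n$ to $(a_0,\dots,a_{d-1})$ is injective, and by equality of dimensions it is an isomorphism. Let $L\subseteq M_k(p)$ be the $\Z$-module of forms all of whose Fourier coefficients are integers; then $c(L)\subseteq\Z^d$, and the entire content of the lemma is that $c(L)=\Z^d$. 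Indeed, if $c(L)=\Z^d$ then for each $n$ the standard basis vector $e_n\in\Z^d$ equals $c(B_{n,k,p})$ for a unique $B_{n,k,p}\in L$, and $c(B_{n,k,p})=e_n$ says precisely that $B_{n,k,p}=q^n+O(q^d)$ with integer coefficients.

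Next I would reduce $c(L)=\Z^d$ to an existence statement, one order at a time: it suffices to produce, for each $0\leq n<d$, an integral form $g_n\in M_k(p)$ of the shape $g_n=q^n+O(q^{n+1})$, that is, vanishing to order exactly $n$ at $\infty$ with leading coefficient $1$. Given such $g_n$, a descending induction on $n$ (starting from $n=d-1$, where $B_{d-1,k,p}=g_{d-1}$) subtracts integer multiples of the already-constructed $B_{n+1,k,p},\dots,B_{d-1,k,p}$ from $g_n$ to kill the coefficients of $q^{n+1},\dots,q^{d-1}$; since every coefficient involved is an integer and each leading coefficient is $1$, this manipulation stays inside $L$ and yields $B_{n,k,p}=q^n+O(q^d)$. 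Equivalently, the $g_n$ already form an integral triangular spanning set for $c(L)$, forcing $c(L)=\Z^d$.

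Finally I would construct the $g_n$. The form $\Phi=\Phi_p=q+O(q^2)$ is integral of weight $0$ with a simple zero at $\infty$ and a pole at the cusp $0$, so multiplication by $\Phi^n$ raises the order at $\infty$ by $n$ while preserving integrality and the leading coefficient. The plan is to fix an integral anchor $\Theta\in M_k(p)$ that is $1+O(q)$ at $\infty$ but vanishes to order at least $d-1$ at the cusp $0$, assembled from the forms $E_k$, $S_{k,p}$, $T_{k,p}$, and the level-$p$ newforms already defined (for instance $\Theta=T_{k,p}$ handles the smallest cases, where $T_{k,p}\to 1$ at $\infty$ and vanishes at $0$); then $g_n=\Theta\,\Phi^n=q^n+O(q^{n+1})$ lies in $M_k(p)$ for every $0\leq n<d$. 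The main obstacle is exactly the existence and integrality of $\Theta$: one must guarantee enough vanishing at $0$ to absorb the poles of $\Phi^n$ for all relevant $n$, which is tight because the valence formula allots total vanishing $k(p+1)/12$ to each weight-$k$ form, and $d-1$ is of this same size. Here the genus-$0$ nature of $X_0(p)$ for $p\in\{2,3,5\}$ is essential: it makes the required divisor achievable, and the explicit integral forms already defined let one exhibit $\Theta$, and hence the $g_n$, concretely, completing the construction.
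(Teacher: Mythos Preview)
Your proposal is correct and amounts to the mirror image of the paper's argument. The paper anchors at the \emph{top}: it constructs an integral form $f_{d-1}\in M_k(p)$ with expansion $q^{d-1}+O(q^d)$, then repeatedly multiplies by $\psi=1/\Phi$ (simple pole at $\infty$, simple zero at $0$) and row-reduces to reach each lower $B_{n,k,p}$. You anchor at the \emph{bottom} with $\Theta=1+O(q)$ vanishing to order $\geq d-1$ at the cusp $0$, and multiply by $\Phi$ instead. These are interchangeable: your $\Theta$ is the paper's $f_{d-1}$ times $\psi^{d-1}$, and conversely, so the existence of one is exactly the existence of the other.

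The only substantive content in either version is building the anchor, and here your proposal stops short. The paper does this by an explicit induction on the weight: it writes down the anchor by hand for $k=0,2$ (and $k=4$ when $p=3$), then observes that $\dim M_{k+4}(2)=\dim M_k(2)+1$, $\dim M_{k+6}(3)=\dim M_k(3)+2$, and $\dim M_{k+4}(5)=\dim M_k(5)+2$, so that multiplying by the fixed integral forms $S_{4,2}$, $\Phi_3\Omega_6$, and $\Phi_5\Lambda_4$ respectively carries the anchor from one weight to the next. Your suggestion that $\Theta=T_{k,p}$ handles the smallest cases only works when $d\leq 2$ (since $T_{k,p}$ has just a \emph{simple} zero at $0$) and when $T_{k,p}$ happens to be integral, which the paper notes is not automatic; it also says nothing for $k=2$, where no $E_2$-based $T_{2,p}$ is modular. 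So you still need a weight-induction of the paper's type; once you supply it, your argument and the paper's are the same proof written in opposite directions. Your appeal to Sturm for the injectivity of $c$ is fine but redundant: once the $g_n$ are in hand, $c$ is surjective between $d$-dimensional spaces and hence bijective.
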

\begin{proof}
To construct the basis, for each weight $k$ we will find a modular
form $f$ in $M_k(p)$ with integer coefficients and leading
coefficient $1$ that vanishes with order $d- 1$ at $\infty$.  We can
then multiply $f$ by $\psi(\tau)$ and subtract off an appropriate
integer multiple of $f$ to get a form with $q$-expansion beginning
$q^{d - 2} + {O}(q^d)$. Repeating this process of multiplying by
$\psi$ and subtracting earlier basis elements, we generate a basis
for $M_{k}(p)$ of modular forms with integer coefficients and the
desired Fourier expansions.

To construct these modular forms vanishing to order $d-1$, we note
that standard dimension formulas \cite[Prop. 6.1]{Stein-comp} yield
the following values for $d=\dim(M_k(p)$:
$$\dim (M_{k}(2)) = \left\lfloor \frac{k}{4} \right\rfloor + 1,\quad
\dim (M_{k}(3)) = \left\lfloor \frac{k}{3} \right\rfloor + 1,\quad\text{ and }\quad
\dim (M_{k}(5)) = 2 \left\lfloor \frac{k}{4} \right\rfloor + 1.$$

In level 2, we find that for weight $0$, the constant function $1$
is the desired modular form, and for weight $2$, the form
$2E_2(2\tau) - E_2(\tau)$ works.  We then note that for $k \geq 0$,
$\textrm{dim}(M_{k+4}(2)) =\textrm{dim}(M_k(2))+ 1$, so the
appropriate form of weight $k+4$ can be obtained from the form of
weight $k$ by multiplying by $S_{4, 2}(\tau)$, which is of weight
$4$ and has a Fourier expansion at $\infty$ beginning $q +
{O}(q^2)$.

For level $3$, in weights $0, 2, 4$ we have $1, \frac12(3E_2(3\tau)
- E_2(\tau)), S_{4, 3}$ respectively. Additionally,
$\textrm{dim}(M_{k+6}(3)) =\textrm{dim}(M_k(3))+ 2$, so multiplying
by a form of weight $6$ with Fourier expansion beginning with $q^2$
suffices to construct the needed forms of higher weight; the form
\[\frac{\eta^{18}(3\tau)}{\eta^{6}(\tau)}=\Phi_3(\tau)\Omega_6(\tau)
= q^2 + 6 q^3 + \cdots\] works.

For level $5$, $\textrm{dim}(M_{k+4}(5)) =\textrm{dim}(M_k(5))+ 2$,
and the weight $4$ form $\eta^{10}(5\tau) \eta^{-2}(\tau)=\Phi_5(\tau)\Lambda_4(\tau)$, along
with the constant function $1$ and the weight $2$ form $\frac14(5E_2(5\tau)
- E_2(\tau))$, suffice to construct the basis in all weights.
\end{proof}

Note that the set $\{B_{n,k,p}\}$ does in fact form an integral
basis of $M_k(p)$, since any form $$f(\tau)=\sum_{n=0}^{\infty}
a_nq^n\in M_k(p)$$ with integer coefficients can clearly be written
as the integer linear combination
$$f(\tau)=\sum_{n=0}^{d-1}a_nB_{k,n,p}(\tau).$$

The importance of this basis for our purposes is that it allows us
to check the divisibility of all coefficients of a modular form by
checking only finitely many coefficients.

\begin{lemma}\label{congruence-basis}
Let $k\geq 0$ be even  and let $p\in\{2,3,5\}$.  Let $d=\dim M_k(p)$.  If $F(\tau)\in M_k(p)$ has integer
coefficients and its first $d$ coefficients are divisible by $p^s$,
then $F(\tau)\equiv 0\pmod {p^s}$.
\end{lemma}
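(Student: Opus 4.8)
The plan is to reduce everything to the integral basis $\{B_{n,k,p}:0\leq n<d\}$ of $M_k(p)$ constructed in Lemma~\ref{integralbasis}, whose two defining features are the echelon structure $B_{n,k,p}=q^n+{O}(q^d)$ and the integrality of all Fourier coefficients. Writing $F(\tau)=\sum_{n\geq 0}a_nq^n$, the remark following Lemma~\ref{integralbasis} expresses $F$ as the integer linear combination
\[F(\tau)=\sum_{n=0}^{d-1}a_nB_{n,k,p}(\tau).\]
The entire argument hinges on reading off what the coefficients in this expansion are.

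The key observation is that the coefficients $a_0,\dots,a_{d-1}$ appearing here are exactly the first $d$ Fourier coefficients of $F$. Indeed, fix $j$ with $0\leq j<d$. The form $B_{n,k,p}$ contributes nothing to the coefficient of $q^j$ unless $n=j$: if $n>j$ its expansion begins at $q^n$, and if $n<j$ its only term below $q^d$ is $q^n$, so in either case there is no $q^j$ term. When $n=j$ it contributes $1$. Hence the coefficient of $q^j$ in $F$ is precisely $a_j$, as claimed.

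By hypothesis $p^s\mid a_j$ for every $0\leq j<d$. Since each basis element $B_{n,k,p}$ has integer coefficients, we may factor out $p^s$ to write $F=p^s\sum_{n=0}^{d-1}(a_n/p^s)B_{n,k,p}$, exhibiting $F$ as $p^s$ times a modular form with integer coefficients. Therefore every Fourier coefficient of $F$ is divisible by $p^s$, i.e. $F\equiv 0\pmod{p^s}$. The only real content of the proof is the bookkeeping in the second paragraph, which identifies the expansion coefficients with the leading $d$ Fourier coefficients of $F$ and relies entirely on the echelon form of the basis; once this is in place the conclusion is immediate from integrality, so I do not expect any genuine obstacle here.
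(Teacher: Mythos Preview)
Your proof is correct and follows exactly the paper's approach: expand $F$ in the integral basis $\{B_{n,k,p}\}$, identify the expansion coefficients with the first $d$ Fourier coefficients via the echelon shape, and conclude from integrality. The only difference is that you spell out the echelon bookkeeping in the second paragraph, which the paper leaves implicit.
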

\begin{proof}
For $F(\tau)=\sum_{n=0}^\infty a_nq^n$, we have
$F(\tau)=\sum_{n=0}^{d-1} a_nB_{k,n,p}(\tau)$.  If each $a_n$ with
$0\leq n\leq d-1$ is divisible by $p^s$, then clearly every
coefficient of $F(\tau)$ is divisible by $p^s$.
\end{proof}

\section{Weakly holomorphic modular forms of negative weight with minimal poles}
Because of the duality of coefficients of basis elements of the
weights $k$ and $2-k$ (Corollary~\ref{duality}), for a fixed $n$ we
can study the coefficients $a_k(mp^s, n)$ of the forms $f_{k, mp^s}$
for all positive values of $m$ and $s$ simply by studying the Fourier
coefficients of the single negative weight form $f_{2-k, n}|U_p$.
To facilitate our study of these forms, we now examine negative weight weakly holomorphic modular
forms of level $p$ which are holomorphic except for a pole at
exactly one of 0 and $\infty$. Since there are no holomorphic
modular forms of negative weight, we see that in fact such a form
must have a pole of order at least one at one of 0 and $\infty$.  In
fact, using a valence formula, we will see that in many cases, such
a modular form must have a pole of order greater than one at one of
the two cusps.

We begin by bounding the order of the pole.
\begin{proposition} Let $p\in\{2,3,5\}$, and let $k<0$ be even.  Let $f\in M_k^!(p)$ and suppose that $f$ is
holomorphic at 0.  Denote by  ${\rm ord}_\infty(f)$ the order of
vanishing of $f$ at $\infty$ (so if $f$ has a pole, this is
negative).  Then
$${\rm ord}_\infty(f)\leq k\left(\frac{v_2(p)}{4}+\frac{v_3(p)}{3}\right),$$
where $v_2$ and $v_3$ are given by \cite[p. 535]{El-Guindy} (see
also \cite[p. 25]{Shimura}).   If equality holds, then $f$ must be
nonvanishing in the upper half plane and at $0$.
\end{proposition}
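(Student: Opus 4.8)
The plan is to read the bound directly off the valence formula for $\Gamma_0(p)$, discarding the terms that the hypotheses force to be nonnegative. Recall (as in \cite[p.~25]{Shimura} or \cite[p.~535]{El-Guindy}) that for a nonzero meromorphic modular form $g$ of weight $k$ on $\Gamma_0(p)$ the valence formula reads
$$
{\rm ord}_\infty(g)+{\rm ord}_0(g)+\frac12\sum_{\tau_2}{\rm ord}_{\tau_2}(g)+\frac13\sum_{\tau_3}{\rm ord}_{\tau_3}(g)+\sum_{\tau}{\rm ord}_\tau(g)=\frac{k}{12}[\SL_2(\Z):\Gamma_0(p)],
$$
where $\tau_2$ runs over the $v_2(p)$ inequivalent elliptic points of order $2$, $\tau_3$ over the $v_3(p)$ inequivalent elliptic points of order $3$, and the final sum over the remaining $\Gamma_0(p)$-inequivalent points of the upper half plane. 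For $p\in\{2,3,5\}$ the curve $X_0(p)$ has genus $0$ and exactly two cusps, so the genus formula gives $[\SL_2(\Z):\Gamma_0(p)]/12=v_2(p)/4+v_3(p)/3$, and the right-hand side above is exactly $k\bigl(v_2(p)/4+v_3(p)/3\bigr)$.

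First I would apply this formula to $f$. Since $f$ is weakly holomorphic it is holomorphic on all of the upper half plane, so every order ${\rm ord}_\tau(f)$ with $\tau$ in the upper half plane is nonnegative; in particular the two elliptic sums and the final sum are nonnegative. By hypothesis $f$ is holomorphic at the cusp $0$, so ${\rm ord}_0(f)\geq 0$ as well. Solving the valence formula for ${\rm ord}_\infty(f)$ and dropping these nonnegative contributions yields
$$
{\rm ord}_\infty(f)\leq \frac{k}{12}[\SL_2(\Z):\Gamma_0(p)]=k\left(\frac{v_2(p)}{4}+\frac{v_3(p)}{3}\right),
$$
which is the asserted inequality. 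Since $k<0$ this forces a genuine pole at $\infty$, consistent with the absence of holomorphic forms of negative weight noted just before the proposition.

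For the equality statement, I would note that if ${\rm ord}_\infty(f)$ attains the bound, then the sum of the discarded terms is exactly $0$. As each of those terms is individually nonnegative, they must all vanish, so ${\rm ord}_0(f)=0$ and ${\rm ord}_\tau(f)=0$ for every $\tau$ in the upper half plane. This says precisely that $f$ is nonvanishing at $0$ and throughout the upper half plane, as claimed.

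Once the valence formula is in hand the argument is pure bookkeeping, so the only real care needed is in matching the normalizations of the orders at the cusps and at the elliptic points to those in the cited references, together with checking the identity $[\SL_2(\Z):\Gamma_0(p)]/12=v_2(p)/4+v_3(p)/3$; the latter is immediate from the genus-$0$, two-cusp structure of $X_0(p)$ for $p\in\{2,3,5\}$. I expect this normalization matching to be the main, and quite mild, obstacle.
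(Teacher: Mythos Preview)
Your proposal is correct and follows essentially the same approach as the paper's proof: both apply the valence formula for $\Gamma_0(p)$, observe that holomorphicity on the upper half plane and at $0$ forces every term except ${\rm ord}_\infty(f)$ to be nonnegative, and then read off both the inequality and the equality characterization. You have simply made explicit the identity $[\SL_2(\Z):\Gamma_0(p)]/12=v_2(p)/4+v_3(p)/3$ (a consequence of the genus formula for $X_0(p)$ with $g=0$ and two cusps), which the paper leaves implicit in its citation of the valence formula.
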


We note that $v_2(2)=1$, $v_2(3)=0$ and $v_2(5)=2$, while
$v_3(2)=0$, $v_3(3)=1$ and $v_3(5)=0$.   In addition, we note that
reversing the roles of 0 and $\infty$ yields an analogous bound
for modular functions holomorphic at $\infty$ with a pole at 0.

\begin{proof} This follows immediately from the valence formula \cite[(3.9)]{El-Guindy}, by
noticing that for a weakly holomorphic modular form whose only pole
is at $\infty$, all the  orders of vanishing at any point except
$\infty$ must be nonnegative. For equality, it is clear that the
order of vanishing at any non-infinite point must be 0.
\end{proof}

For $k<0$ and even and $p$ prime, we now define $\theta_{k,p}$ to be
a weakly holomorphic modular form of weight $k$  and level $p$ that
is holomorphic on the upper half plane and at $0$, with a pole of
minimal possible order at $\infty$ and leading coefficient 1.  We
define $\alpha_{k,p}$ in a similar way, but require that it be
holomorphic at $\infty$ and with minimal possible pole at 0.

Note that $\theta_{k,p}$ and $\alpha_{k,p}$ are well-defined, since
given two such objects,  their difference must be a holomorphic
modular form of negative weight, hence 0.  In addition, for the $k$
and $p$ that we study, $(p\tau)^{-k}\theta_{k,p}(-1/p\tau)$ can
easily be shown (using properties of the Fricke involution) to be a
multiple of $\alpha_{k,p}$.  It will be convenient to write this
relation as
$\theta_{k,p}(-1/p\tau)=\mu_{k,p}\tau^k\alpha_{k,p}(\tau)$.  We also
note that often, $\alpha_{k,p}\equiv 1\pmod {p^{\nu_{k,p}}}$ for
some value of $\nu$.  In Tables~\ref{theta-alpha-2},
~\ref{theta-alpha-3}, and \ref{theta-alpha-5} we define
$\theta_{k,p}$ and $\alpha_{k,p}$ for $k\in\{-2,-4,-6,-8,-12\}$ and
$p\in\{2,3,5\}$, and give the values of $\mu_{k,p}$ and $\nu_{k,p}$.

\begin{table}[t]
\begin{minipage}{2.3in}
\begin{tabular}{| c | c | c | c | c |}
\hline $k$&$\theta_{k,2}$&$\alpha_{k,2}$&$\mu_{k,2}$&$\nu_{k,2}$\cr
\hline
$-2$&$\displaystyle{\frac{\Xi_{10}(\tau)}{\Delta(2\tau)}}$&$\displaystyle{\frac{\Xi_{10}(\tau)}{\Delta(\tau)}}$&$-2^5$&$3$\cr
\hline
$-4$&$\displaystyle{\frac{\Xi_8(\tau)}{\Delta(2\tau)}}$&$\displaystyle{\frac{\Xi_8(\tau)}{\Delta(\tau)}}$&$2^4$&$4$\cr
\hline
$-6$&$\displaystyle{\frac{T_6(\tau)}{\Delta(2\tau)}}$&$\displaystyle{\frac{S_6(\tau)}{\Delta(\tau)}}$&$-2^9$&$3$\cr
\hline
$-8$&$\displaystyle{\frac{T_4(\tau)}{\Delta(2\tau)}}$&$\displaystyle{\frac{S_4(\tau)}{\Delta(\tau)}}$&$2^8$&$5$\cr
\hline
$-12$&$\displaystyle{\frac{\Delta(\tau)}{\Delta^2(2\tau)}}$&$\displaystyle{\frac{\Delta(2\tau)}{\Delta^2(\tau)}}$
&$2^{12}$&$4$\cr \hline
\end{tabular}
\caption{Values of $\theta_{k,p}$, $\alpha_{k,p}$, $\mu_{k,p}$ and $\nu_{k,p}$ for $p=2$.}\label{theta-alpha-2}
\end{minipage}
\hfill
\begin{minipage}{3in}
\begin{tabular}{| c | c | c | c | c |}
\hline $k$&$\theta_{k,3}$&$\alpha_{k,3}$&$\mu_{k,3}$&$\nu_{k,3}$\cr
\hline
$-2$&$\displaystyle{\frac{S_4(\tau)\Omega_6(\tau)}{\Delta(3\tau)}}$&$\displaystyle{\frac{T_4(\tau)\Omega_6(\tau)}{\Delta(\tau)}}$
&$-3^2$&$1$\cr \hline
$-4$&$\displaystyle{\frac{S_4(\tau)T_4(\tau)}{\Delta(3\tau)}}$&$\displaystyle{\frac{S_4(\tau)T_4(\tau)}{\Delta(\tau)}}$
&$3^4$&$1$\cr \hline
$-6$&$\displaystyle{\frac{\Omega_6(\tau)}{\Delta(3\tau)}}$&$\displaystyle{\frac{\Omega_6(\tau)}{\Delta(\tau)}}$&$-3^3$&$2$\cr
\hline
$-8$&$\displaystyle{\frac{T_4(\tau)}{\Delta(3\tau)}}$&$\displaystyle{\frac{S_4(\tau)}{\Delta(\tau)}}$&$3^5$&$1$\cr
\hline
$-12$&$\displaystyle{\frac{\Phi(\tau)\Delta(\tau)}{\Delta^2(3\tau)}}$
&$\displaystyle{\frac{\psi(\tau)\Delta(3\tau)}{\Delta^2(\tau)}}$&$3^6$&$2$\cr
\hline
\end{tabular}
\caption{Values of $\theta_{k,p}$, $\alpha_{k,p}$, $\mu_{k,p}$ and $\nu_{k,p}$ for $p=3$.}\label{theta-alpha-3}
\end{minipage}
\end{table}

\begin{table}
\begin{center}
\begin{tabular}{| c | c | c | c | c |}
\hline $k$&$\theta_{k,5}$&$\alpha_{k,5}$&$\mu_{k,5}$&$\nu_{k,5}$\cr
\hline
$-2$&$\displaystyle{\frac{\Lambda_4(\tau)\Lambda_6(\tau)\Phi(\tau)}{\Delta(5\tau)}}$
&$\displaystyle{\frac{\Lambda_4(\tau)\Lambda_6(\tau)\psi(\tau)}{\Delta(\tau)}}$&$-5^2$&$0$\cr
\hline
$-4$&$\displaystyle{\frac{\Lambda_4(\tau)^2\Phi(\tau)}{\Delta(5\tau)}}$
&$\displaystyle{\frac{\Lambda_4^2(\tau)\psi(\tau)}{\Delta(\tau)}}$&$5$&$1$\cr
\hline $-6$&$\displaystyle{\frac{\Lambda_6(\tau)}{\Delta(5\tau)}}$
&$\displaystyle{\frac{\Lambda_6(\tau)}{\Delta(\tau)}}$&$-5^3$&$0$\cr
\hline $-8$&$\displaystyle{\frac{\Lambda_4(\tau)}{\Delta(5\tau)}}$
&$\displaystyle{\frac{\Lambda_4(\tau)}{\Delta(\tau)}}$&$5^2$&$1$\cr
\hline
$-12$&$\displaystyle{\frac{\Phi(\tau)\Lambda_4^3(\tau)}{\Delta^2(5\tau)}}$
&$\displaystyle{\frac{\psi(\tau)\Lambda_4^3(\tau)}{\Delta^2(\tau)}}$&$5^3$&$1$\cr
\hline
\end{tabular}
\end{center}
\caption{Values of $\theta_{k,p}$, $\alpha_{k,p}$, $\mu_{k,p}$ and $\nu_{k,p}$ for $p=5$.}\label{theta-alpha-5}
\end{table}

We remark that in all cases except $p=5$, $k=-2$ or $-6$, the
minimum order  of the pole predicted by the valence
formula was achieved.  For $p=5$, $k=-2$, the valence formula
predicts that $\theta_{k,p}$ should have a pole of order at least 1
(since $k(v_2(5)/4+v_3(5)/3)=-1$).  If  equality were achieved,
then $\theta_{k,p}$ would have to be nonvanishing on the upper half
plane and at 0, so that $1/\theta_{k,p}$ would be a holomorphic
modular form of weight 2 and level 5, vanishing at $\infty$.  Since
no such form exists, we see that $\theta_{-2,5}$ must have at least
a double pole.  Similarly, for $p=5$ and $k=-6$ the bound arising
from the valence formula indicates that $\theta_{-6,5}$ must have a
pole of order at least 3.  However, if $\theta_{-6,5}$ had a pole of
order 3, then its reciprocal would be a nonzero holomorphic modular
form of weight 6, level 5 that vanishes at $\infty$ with order 3.
The basis computations in the previous section show that such an
object cannot exist.  Hence, the order of the pole of
$\theta_{-6,5}$ must be at least 4.

The congruences on $\alpha_{k,p}$ follow from the following lemma.

\begin{lemma} \label{alpha}
Let $\alpha(\tau)=\sum_{n=0}^\infty a_nq^n$ be a weight $k$ weakly
holomorphic  modular form of level p, with $k$ even and negative.
Suppose that $\alpha$ has integer coefficients, is holomorphic at
$\infty$ with $a_0=1$, and has a pole of order $m$ at 0.  If
$p^r|a_i$ for all $1\leq i\leq m$ and there is some $F(\tau)\in
M_{-k}(p)$ having leading coefficient 1 and integer coefficients with $F(\tau)\equiv
1\pmod {p^r}$, then $\alpha(\tau)\equiv 1\pmod {p^r}$.
\end{lemma}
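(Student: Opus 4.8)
The plan is to pass from the weight-$k$ form $\alpha$ to the weight-$0$ modular function $h=\alpha F$ and then to control $h$ by writing it as a polynomial in the Hauptmodul $\Phi=\Phi_p$.

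First I would set $h=\alpha F$. Since $\alpha$ has weight $k$ and $F\in M_{-k}(p)$ has weight $-k$, the product $h$ is a weakly holomorphic modular form of weight $0$ and level $p$; it is holomorphic on the upper half plane and at $\infty$ (both factors are), its constant term is $a_0\cdot 1=1$, and its only pole is at the cusp $0$, of order at most $m$ (because $F$ is holomorphic at $0$ while $\alpha$ has a pole of order $m$ there). Writing $F=1+p^rG$ with $G\in\Z[[q]]$ — possible since $F$ has integer coefficients, constant term $1$, and $F\equiv 1\pmod{p^r}$ — we get $h=\alpha+p^r\alpha G$, so the Fourier coefficients of $h$ satisfy $h_n\equiv a_n\pmod{p^r}$ for every $n$. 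Thus it suffices to prove $h\equiv 1\pmod{p^r}$, and the role of $F$ is precisely to convert the weight-$k$ object $\alpha$ into a level-$p$ modular function while preserving the relevant congruence.

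Next I would express $h$ as a polynomial in $\Phi$. For $p\in\{2,3,5\}$ the curve $X_0(p)$ has genus $0$, and $\Phi$ is a Hauptmodul with integer coefficients having a simple zero at $\infty$ (so $\Phi=q+O(q^2)$) and a simple pole at $0$. Consequently the space of weight-$0$ weakly holomorphic forms of level $p$ that are holomorphic at $\infty$ and have a pole of order at most $m$ at $0$ is exactly the span of $1,\Phi,\ldots,\Phi^m$, whence $h=\sum_{j=0}^{m}c_j\Phi^j$ for some constants $c_j$. Because $\Phi^j=q^j+O(q^{j+1})$ has integer coefficients and leading coefficient $1$, the change of basis from $\{\Phi^j\}$ to $\{q^j\}$ is integral and unitriangular; solving recursively gives $c_0=h_0=1$ and, for $j\geq 1$, $c_j=h_j-\gamma_j$, where $\gamma_j$ is an integer combination of $c_1,\ldots,c_{j-1}$, so in particular each $c_j\in\Z$.

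Finally I would feed in the hypotheses. Since $h_n\equiv a_n\pmod{p^r}$ and $p^r\mid a_i$ for $1\leq i\leq m$, we have $h_i\equiv 0\pmod{p^r}$ for $1\leq i\leq m$; the triangular recursion then yields $c_j\equiv 0\pmod{p^r}$ for $1\leq j\leq m$ by induction. As each $\Phi^j$ has integer coefficients, this forces $h=1+\sum_{j=1}^{m}c_j\Phi^j\equiv 1\pmod{p^r}$, and hence $\alpha\equiv h\equiv 1\pmod{p^r}$, as desired. The step requiring the most care is the structural claim that $h$ is a polynomial in $\Phi$ of degree at most $m$: this rests on the genus-$0$ property of $X_0(p)$ together with the divisor of $\Phi$, and on matching the normalization of ``pole of order $m$ at $0$'' for $\alpha$ with the order of the pole of $\Phi$ at $0$, so that the degree bound comes out to be exactly $m$ rather than a multiple depending on the width of the cusp.
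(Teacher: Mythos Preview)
Your proof is correct and follows essentially the same approach as the paper: form the weight-$0$ function $\alpha F$, expand it as an integral polynomial in the Hauptmodul $\Phi$, and use the unitriangular change of basis together with the divisibility of the first $m$ nonconstant coefficients to conclude. The only cosmetic difference is that the paper subtracts $1$ first and works with $\alpha F-1=\sum_{n=1}^m c_n\Phi^n$, whereas you keep the constant term and write $h=\sum_{j=0}^m c_j\Phi^j$ with $c_0=1$.
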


\begin{proof}
Note that $\alpha(\tau)F(\tau)-1=\sum_{n=1}^\infty b_nq^n$ is weakly
holomorphic  of weight 0 and is holomorphic at $\infty$, with a pole
of order at most $m$ at 0 and vanishing at $\infty$.  As such, it
must be an integer linear combination of $\Phi(\tau),\ldots,\Phi^m(\tau)$.
Since $F(\tau)\equiv 1\pmod{ p^r}$, we see that each $b_n\equiv
a_n$, so that each $b_n$ is divisible by $p^r$.  If we write
$$\alpha(\tau)F(\tau)-1=\sum_{n=1}^\infty b_nq^n=\sum_{n=1}^m c_n\Phi^n(\tau)$$
it is clear that $c_1=b_1$, and that $c_2\equiv b_2\pmod {b_1}$, so
that $c_2\equiv b_2\equiv 0\pmod{p^r}$.  Inductively, we find that
each $c_n\equiv 0\pmod {p^r}$, so that $\alpha(\tau)F(\tau)-1\equiv
0\pmod{p^r}$.  Since $F(\tau)\equiv 1\pmod {p^r}$, we see that
$\alpha(\tau)\equiv 1\pmod{ p^r}$.
\end{proof}

We apply this lemma to each of the $\alpha_{k,p}$ with
$k\in\{-2,-4,-6,-8,-12\}$.   For $k=-4,-6,-8$, we use the Eisenstein
series of weight $-k$ for $F(\tau)$.  For $k=-12$, we set $F(\tau)=E_4(\tau)^3$.  For $k=-2$ and $p=2,3$,  we
let $F(\tau)$ be the unique monic form of weight 2, level $p$, which
is given by \cite[p. 88]{Stein-comp}
$$\frac{E_2(\tau)-pE_2(p\tau)}{1-p}$$
and is congruent to 1 mod {8} in level 2, and congruent to 1 mod 3
in level 3.  In each case, one checks easily that $F(\tau)\equiv
1\pmod {p^{\nu_{k,p}}}$, and that the first $m$ coefficients of
$\alpha_{k,p}$ are divisible by $p^{\nu_{k,p}}$. Note that there is
no congruence on the coefficients of $\alpha_{-2,5}$ modulo $5$.

\section{Reduction to finitely many coefficients}
To prove Theorem~\ref{main}, it remains to show that for any integer
$s \geq 1$, the coefficient $a_k(mp^s, n)$ of the form $f_{k, mp^s}$
is sufficiently divisible by $p$ when $p \nmid n$.  To accomplish
this, we define
$$g(\tau)=f_{k,mp^s}(\tau)-f_{k,mp^{s-1}}(p\tau)=\sum_{n=1}^\infty
\gamma_nq^n.$$ Note that for $p\nmid n$, $\gamma_n=a_k(mp^s,n)$.  We
will prove that all of the $\gamma_n$ are divisible by
$p^{\epsilon_{k,p}}$. In this section, we show that it is actually
sufficient to prove that finitely many of the $\gamma_n$ satisfy the
desired divisibility.

\begin{lemma} \label{congruence}
Let $p\in\{2,3,5,7,13\}$, $k\in\{4,6,8,10,14\}$,
$\lambda=24/(p-1)$, and $m>0$.  Let  $g(\tau)=f_{k,mp^s}-f_{k,mp^{s-1}}(p\tau)$
with $s>0$.  Then we have that $$g(\tau)\equiv
F(\tau)\pmod{p^{\lambda/2}}$$ for some holomorphic modular form $F\in M_k(p)$, where $F$ has integer coefficients and
vanishes at $\infty$.
\end{lemma}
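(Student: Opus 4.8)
The plan is to read off the behaviour of $g$ at the two cusps of $\Gamma_0(p)$ and then to cancel its pole at $0$ by an explicit form whose $q$-expansion at $\infty$ is divisible by $p^{\lambda/2}$. The divisibility will come entirely from the Fricke relation $\psi(-1/(p\tau))=p^{\lambda/2}\Phi(\tau)$, which is precisely where the hypothesis $\lambda/2=12/(p-1)\in\Z$ (equivalently $p\in\{2,3,5,7,13\}$) enters.

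First I would record the two-cusp picture. Writing $(f|W_p)(\tau)=p^{-k/2}\tau^{-k}f(-1/(p\tau))$ for the Fricke involution and using that each $f_{k,M}$ is modular on all of $\SL_2(\Z)$, one computes $f_{k,M}|W_p=p^{k/2}(f_{k,M}|V_p)$ and $(f_{k,M}|V_p)|W_p=p^{-k/2}f_{k,M}$, hence
$$g|W_p=p^{k/2}(f_{k,mp^s}|V_p)-p^{-k/2}f_{k,mp^{s-1}}.$$
Since $f_{k,M}=q^{-M}+{O}(q)$ for these weights ($\ell=0$), the leading terms of $f_{k,mp^s}$ and $f_{k,mp^{s-1}}|V_p$ cancel, so $g=\sum_{n\geq1}\gamma_nq^n$ vanishes at $\infty$; and the displayed formula shows that the principal part of $g$ at $0$, namely the principal part of $g|W_p$ at $\infty$, is exactly the two-term expression $p^{k/2}q^{-mp^{s+1}}-p^{-k/2}q^{-mp^{s-1}}$.

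Next I would build the cancelling form from the blocks
$$R_j:=(\psi^jE_k)|W_p=p^{j\lambda/2+k/2}\,\Phi^j\,(E_k|V_p),\qquad j\geq1,$$
where the second equality follows directly from $\psi(-1/(p\tau))=p^{\lambda/2}\Phi$ and $E_k(-1/(p\tau))=(p\tau)^kE_k(p\tau)$. Each $R_j$ is a weight-$k$ level-$p$ form with integer coefficients (using that $E_k$ is integral for $k\in\{4,6,8,10,14\}$); it vanishes to order $j$ at $\infty$, has a pole of order $j$ at $0$, and its expansion at $\infty$ is divisible by $p^{j\lambda/2+k/2}$. Because $R_j|W_p=\psi^jE_k=q^{-j}+{O}(q^{-j+1})$ is monic with integer coefficients, I can combine the $R_j$ by a unipotent, hence solvable, triangular system $P_0=\sum_{j=1}^{mp^{s+1}}c_jR_j$ so that $P_0|W_p$ has the same principal part at $\infty$ as $g|W_p$, i.e. $P_0$ has the same principal part at $0$ as $g$. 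Solving against the target with entries $p^{k/2}$ and $-p^{-k/2}$ produces coefficients $c_j\in p^{-k/2}\Z$, so each $c_jR_j$ has expansion $p^{j\lambda/2}\cdot(\text{integral series})$; in particular $P_0$ has integer coefficients, vanishes at $\infty$, and satisfies $P_0\equiv 0\pmod{p^{\lambda/2}}$.

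Finally I would set $F=g-P_0$. By construction the principal parts of $g$ and $P_0$ at $0$ agree, so $F$ is holomorphic at $0$; both $g$ and $P_0$ vanish at $\infty$, so $F\in M_k(p)$ has integer coefficients and vanishes at $\infty$, and $g-F=P_0\equiv0\pmod{p^{\lambda/2}}$ gives the claim. The main obstacle is the integrality bookkeeping in the triangular solve: both the target principal part and the solved coefficients $c_j$ genuinely carry the denominator $p^{-k/2}$, and one must check that this denominator is always absorbed by the factor $p^{k/2}$ sitting inside each $R_j$, leaving contributions that are not merely integral but $p^{\lambda/2}$-divisible. This cancellation is exactly what forces $\lambda/2$ to be an integer, and it is the structural reason the lemma is restricted to $p\in\{2,3,5,7,13\}$.
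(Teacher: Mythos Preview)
Your proposal is correct and follows essentially the same approach as the paper's proof: expand the Fricke transform of $g$ at $\infty$ in terms of $\psi^jE_k$, then use $\psi(-1/(p\tau))=p^{\lambda/2}\Phi(\tau)$ to see that the corresponding contribution to the $q$-expansion at $\infty$ is divisible by $p^{\lambda/2}$, with the remainder holomorphic at both cusps. The only differences are cosmetic---you work throughout with the normalized Fricke involution $W_p$ (carrying the factor $p^{-k/2}$), whereas the paper uses the unnormalized substitution $\tau\mapsto -1/(p\tau)$ directly, so the stray powers of $p^{\pm k/2}$ in your $c_j$ and $R_j$ correspond exactly to the paper's statement that $p^kB_i\in\Z$; and you spell out the unipotent triangular solve that the paper leaves implicit.
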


\begin{proof}
We immediately notice that $g(\tau)\in M_k^!(p)$ and vanishes at
$\infty$.  We are then interested in its behavior at the cusp $0$ of
$\Gamma_0(p)$.

We have $g(t)= f_{k,mp^s}(\tau)-f_{k,mp^{s-1}}(p\tau)$.
Hence,
\begin{align*}
\tau^{-k}g(-1/\tau)&=\tau^{-k}\left(f_{k,mp^s}(-1/\tau)-f_{k,mp^{s-1}}(-p/\tau)\right)\cr
&=\tau^{-k}\left(\tau^kf_{k,mp^s}(\tau)-\left(\frac{\tau}p\right)^kf_{k,mp^{s-1}}(\tau/p)\right)\cr
&=f_{k,mp^s}(\tau)-\frac1{p^k}f_{k,mp^{s-1}}(\tau/p).
\end{align*}
Replacing $\tau$ by $p\tau$, we obtain
\begin{align*}(p\tau)^{-k}g(-1/(p\tau))&=f_{k,mp^s}(p\tau)-\frac1{p^k}f_{k,mp^{s-1}}(\tau).\cr
&=q^{-mp^{s+1}}-\frac1{p^k}q^{mp^{s-1}}+{O}(q)\cr
&=\sum_{i=1}^{mp^{s+1}}B_i\psi(\tau)^iE_k(\tau)+h(\tau)
\end{align*}
where  $p^kB_i\in\Z$ for all $i$, and where $h(\tau)$ is modular on $\Gamma_0(p)$ and holomorphic at both $0$ and $\infty$.

We now replace $\tau$ by $-1/p\tau$, to obtain
\begin{align*}
\tau^kg(\tau)&= \sum_{i=1}^{mp^{s+1}}B_i\psi(-1/p\tau)^iE_k(-1/p\tau)+h(-1/p\tau)\cr
&=\sum_{i=1}^{mp^{s+1}}B_i(p^{\lambda/2}\Phi(\tau))^i(p\tau)^kE_k(p\tau)+h(-1/p\tau)\cr
&=\sum_{i=1}^{mp^{s+1}}(p^kB_i)(p^{\lambda/2}\Phi(\tau))^i\tau^kE_k(p\tau)+h(-1/p\tau)\cr
\end{align*}
Dividing by $\tau^k$, writing $A_i=p^kB_i\in\Z$, and setting $F(\tau)=\tau^{-k}h(-1/p\tau)$, we see that
$$g(\tau)=p^{\lambda/2}\sum_{i=1}^{mp^{s+1}}A_i(p^{\lambda/2})^{i-1}\Phi(\tau)^iE_k(p\tau)+F(\tau).$$

We note that $F(\tau)$ has the following properties: it is modular
of weight $k$ on  $\Gamma_0(p)$ and has integer coefficients (since
both $g(\tau)$ and the sum have these properties), and it is
holomorphic at both cusps of $\Gamma_0(p)$ (since $h(\tau)$ has this
property).  Further, since both $g(\tau)$ and the sum vanish at
$\infty$, it is clear that $F(\tau)$ vanishes at $\infty$.
\end{proof}

Now, for a given $p\in\{2,3,5\}$, since  $\lambda/2>\epsilon_{k,p}$,
we can study the divisibility of the coefficients of $g(\tau)$ by
$p^{\epsilon_{k,p}}$ by studying the coefficients of $F(\tau)$.
Since $F(\tau)\in M_k(p)$, we can apply Lemma~\ref{congruence-basis}
to it to obtain the following corollary.

\begin{corollary}\label{gamma}
Let $k\in\{4,6,8,10,14\}$, $p\in\{2,3,5\}$, $m,s>0$.  Let $d$ be the
dimension of $M_k(p)$  If
$$g(\tau)=f_{k,mp^s}(\tau)-f_{k,mp^{s-1}}(p\tau)=\sum_{n=1}^\infty\gamma_nq^n,$$
and the $\gamma_n$ with $1\leq n\leq d-1$ are all divisible by
$p^{\epsilon_{k,p}}$, then all of the $\gamma_n$ are divisible by
$p^{\epsilon_{k,p}}$.  In particular, if $p\nmid n$, we have
$a_k(mp^s,n)=\gamma_n$ and thus $v_p(a_k(mp^s,n)\geq\epsilon_{k,p}$.
\end{corollary}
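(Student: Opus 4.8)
The plan is to deduce the corollary by gluing together the two preceding lemmas: Lemma~\ref{congruence} reduces $g$ modulo a large power of $p$ to an honest holomorphic form $F\in M_k(p)$, and Lemma~\ref{congruence-basis} lets me certify the vanishing of all coefficients of $F$ modulo $p^{\epsilon_{k,p}}$ from only finitely many of them. First I would invoke Lemma~\ref{congruence} to obtain $F\in M_k(p)$ with integer coefficients, vanishing at $\infty$, and satisfying $g\equiv F\pmod{p^{\lambda/2}}$, where $\lambda/2 = 12/(p-1)$. The key numerical observation is that $\lambda/2>\epsilon_{k,p}$ in every case: $12>8\geq\epsilon_{k,2}$, $6>3\geq\epsilon_{k,3}$, and $3>1=\epsilon_{k,5}$. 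Hence the congruence persists modulo the smaller power, giving $g\equiv F\pmod{p^{\epsilon_{k,p}}}$; writing $F=\sum_n c_nq^n$, this says $\gamma_n\equiv c_n\pmod{p^{\epsilon_{k,p}}}$ for every $n$.

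Next I would transfer the hypothesis on $\gamma_1,\dots,\gamma_{d-1}$ into a divisibility statement about the first $d$ coefficients of $F$. Because $F$ vanishes at $\infty$, its constant term $c_0$ is zero, hence trivially divisible by $p^{\epsilon_{k,p}}$; and the assumed divisibility of $\gamma_1,\dots,\gamma_{d-1}$ transfers through the congruence $\gamma_n\equiv c_n$ to $c_1,\dots,c_{d-1}$. Thus all of $c_0,c_1,\dots,c_{d-1}$ are divisible by $p^{\epsilon_{k,p}}$. Applying Lemma~\ref{congruence-basis} to $F$ then forces $F\equiv 0\pmod{p^{\epsilon_{k,p}}}$, i.e.\ every $c_n$ is divisible by $p^{\epsilon_{k,p}}$. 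Reading this back through $\gamma_n\equiv c_n$ yields $p^{\epsilon_{k,p}}\mid\gamma_n$ for all $n$, which is the first assertion.

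For the closing sentence I would note that $f_{k,mp^{s-1}}(p\tau)$ is a power series in $q^p$, so its coefficient at $q^n$ vanishes whenever $p\nmid n$. Consequently, for $p\nmid n$ the coefficient $\gamma_n$ of $g$ agrees with the coefficient of $q^n$ in $f_{k,mp^s}$, which is $a_k(mp^s,n)$ by definition. Combined with the divisibility just established, this gives $v_p(a_k(mp^s,n))\geq\epsilon_{k,p}$.

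As for the main obstacle, the substantive work has already been carried out in Lemmas~\ref{congruence} and~\ref{congruence-basis}, so the corollary is essentially a bookkeeping argument. The only points demanding care are the verification that $\lambda/2>\epsilon_{k,p}$ uniformly, which is what permits weakening the modulus supplied by Lemma~\ref{congruence} to the target modulus, and the recognition that the vanishing of $F$ at $\infty$ is exactly what promotes the $d-1$ hypotheses into the $d$ controlled coefficients required by Lemma~\ref{congruence-basis}. No individual step presents a genuine difficulty.
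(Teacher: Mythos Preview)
Your proof is correct and follows essentially the same route as the paper's: invoke Lemma~\ref{congruence} to replace $g$ by a holomorphic $F\in M_k(p)$ modulo $p^{\lambda/2}$, use $\lambda/2>\epsilon_{k,p}$ to pass the hypothesis on $\gamma_1,\dots,\gamma_{d-1}$ to the first $d$ coefficients of $F$, and then apply Lemma~\ref{congruence-basis}. Your explicit check of the inequality $\lambda/2>\epsilon_{k,p}$ and your remark that $c_0=0$ supplies the ``missing'' coefficient are helpful clarifications the paper leaves implicit.
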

\begin{proof} By Lemma~\ref{congruence}, we have that $g(\tau)\equiv F(\tau)\pmod{p^{\lambda/2}}$,
for some $F(\tau)\in M_k(p)$ that vanishes at infinity.  Write
$$F(\tau)=\sum_{n=1}^\infty d_nq^n.$$  Since $\lambda/2>\epsilon_{k,p}$, the assumptions of
the corollary show that $p^{\epsilon_{k,p}}|d_n$ for all $1\leq
n<d$.  Lemma~\ref{congruence-basis}  then shows that
$p^{\epsilon_{k,p}}|d_n$ for all $n$, and hence that
$p^{\epsilon_{k,p}}|\gamma_n$ for all $n$, as desired.
\end{proof}

Corollary~\ref{gamma} allows us to test divisibility of all the
$\gamma_n$ (for a given $m$) by testing finitely many of them.  We
now prove a theorem which uses duality to  allow us to test
divisibility of all the $\gamma_n$ (for arbitrary $m>0$) by studying
a small number of modular forms.

\begin{theorem}\label{forms-to-test} Let $p\in\{2,3,5\}$, $k\in\{4,6,8,10,14\}$, and let  $d$ be the dimension of $M_k(p)$.  If
$$(f_{2-k,j}|U_p)(\tau)\equiv a_{2-k}(j,0)\pmod {p^{\epsilon_{k,p}}}$$
for each $j$ with $1\leq j\leq d-1$ and $p\nmid j$, and
$$(f_{2-k,j}|U_p)(\tau)-f_{2-k,j/p}(\tau)\equiv a_{2-k}(j,0)-a_{2-k}(j/p,0)\pmod{ p^{\epsilon_{k,p}}}$$
for each $j$ with $1\leq j\leq d-1$ and $p|j$, then
$p^{\epsilon_{k,p}}|a_k(mp^s,n)$ for all $s > 0$ and $m,n>0$ prime to
$p$.
\end{theorem}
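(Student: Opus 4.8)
The plan is to combine the duality of Corollary~\ref{duality} with the reduction of Corollary~\ref{gamma}, so that the statement reduces to directly reading off coefficients of the weight $2-k$ forms appearing in the hypotheses. Fix $m>0$ prime to $p$ and $s>0$, and set $g(\tau)=f_{k,mp^s}(\tau)-f_{k,mp^{s-1}}(p\tau)=\sum_{n\geq 1}\gamma_n q^n$. By Corollary~\ref{gamma} it suffices to show $p^{\epsilon_{k,p}}\mid\gamma_n$ for $1\leq n\leq d-1$; the corollary then propagates this to all $\gamma_n$ and identifies $\gamma_n=a_k(mp^s,n)$ whenever $p\nmid n$, giving exactly the claimed divisibility of $a_k(mp^s,n)$ for all relevant $m,n$.

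First I would compute $\gamma_n$ explicitly. Since $f_{k,mp^{s-1}}(p\tau)=\sum_N a_k(mp^{s-1},N)q^{pN}$ contributes only to exponents divisible by $p$, we get $\gamma_n=a_k(mp^s,n)$ when $p\nmid n$ and $\gamma_n=a_k(mp^s,n)-a_k(mp^{s-1},n/p)$ when $p\mid n$. Applying Corollary~\ref{duality} turns every $a_k(\cdot,\cdot)$ into a coefficient of a weight $2-k$ basis element: $a_k(mp^s,n)=-a_{2-k}(n,mp^s)$ and $a_k(mp^{s-1},n/p)=-a_{2-k}(n/p,mp^{s-1})$. The key observation is that these are precisely coefficients of the forms in the hypotheses: $a_{2-k}(n,mp^s)$ is the coefficient of $q^{mp^{s-1}}$ in $f_{2-k,n}|U_p$, and $a_{2-k}(n/p,mp^{s-1})$ is the coefficient of $q^{mp^{s-1}}$ in $f_{2-k,n/p}$. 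Since $m\geq 1$ and $s\geq 1$, the exponent $mp^{s-1}\geq 1$ is strictly positive, so in each case we are reading off a nonconstant coefficient.

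With this dictionary in place, the two hypotheses finish the two cases at once, with no further induction on $s$ needed. When $p\nmid n$ (taking $j=n\leq d-1$), the first congruence says $f_{2-k,n}|U_p\equiv a_{2-k}(n,0)\pmod{p^{\epsilon_{k,p}}}$, so every nonconstant coefficient is divisible by $p^{\epsilon_{k,p}}$; in particular $\gamma_n=-a_{2-k}(n,mp^s)\equiv 0$. When $p\mid n$ (again $j=n\leq d-1$, with $n/p\geq 1$), the second congruence says every nonconstant coefficient of $f_{2-k,n}|U_p-f_{2-k,n/p}$ is divisible by $p^{\epsilon_{k,p}}$, and the coefficient of $q^{mp^{s-1}}$ there is exactly $a_{2-k}(n,mp^s)-a_{2-k}(n/p,mp^{s-1})=-\gamma_n$. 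Either way $p^{\epsilon_{k,p}}\mid\gamma_n$ for $1\leq n\leq d-1$, and Corollary~\ref{gamma} concludes the proof.

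The only real subtlety, and the step I would be most careful about, is the bookkeeping around the behavior at $\infty$ when $p\mid n$. In that case $f_{2-k,n}$ has a pole of order $n$ at $\infty$ with $p\mid n$, so $f_{2-k,n}|U_p$ inherits a pole of order $n/p$; this is precisely why the hypothesis subtracts $f_{2-k,n/p}$, whose own leading term $q^{-n/p}$ cancels it and makes the difference holomorphic at $\infty$, so that congruence to a constant is even meaningful. I would check that the leading coefficients match (both equal to $1$) so the cancellation is exact, and confirm that $f_{2-k,n/p}$ is a legitimate basis element, i.e. that $n/p$ lies in the allowed range of pole orders for weight $2-k$ (here $n/p\geq 1$, which holds since $p\mid n$). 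Everything else is a matter of matching indices through the $U_p$ operator and the duality relation.
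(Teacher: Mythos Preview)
Your argument is correct and follows essentially the same route as the paper's own proof: reduce via Corollary~\ref{gamma} to the first $d-1$ coefficients of $g$, compute $\gamma_n$ explicitly in the two cases $p\nmid n$ and $p\mid n$, apply the duality of Corollary~\ref{duality}, and recognize the resulting weight $2-k$ coefficients as nonconstant coefficients of the forms appearing in the hypotheses. The extra bookkeeping you include (identifying the exact $q$-exponent $mp^{s-1}$, checking it is positive, and verifying the pole cancellation and that $f_{2-k,n/p}$ is a genuine basis element) is correct and merely makes explicit what the paper leaves implicit.
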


\begin{proof}
By Corollary~\ref{gamma}, we need to show that for any $m,s>0$,
$p^{\epsilon_{k,p}}$ divides each $\gamma_n=a_k(mp^s,n)$ for $1\leq
n\leq d-1$  with $p\nmid n$ and each
$\gamma_n=a_k(mp^s,n)-a_k(mp^{s-1},n/p)$ for $1\leq n\leq d-1$ with
$p|n$.  Using duality (Corollary~\ref{duality}), we see that this is
equivalent to checking that for $1\leq n<d$,  $p^{\epsilon_{k,p}}$
divides each $a_{2-k}(n,mp^s)$ when $p\nmid n$ and each
$a_{2-k}(n,mp^s)-a_{2-k}(n/p,mp^{s-1})$ when $p|n$. As $m$ and $s$
run through all positive integers, the $a_{2-k}(n,mp^s)$ are just
the  nonconstant coefficients of $f_{2-k,n}|U_p$, and the
$a_{2-k}(n,mp^s)-a_{2-k}(n/p,mp^{s-1})$ are the nonconstant
coefficients of $(f_{2-k,n}|U_p)-f_{2-k,n/p}$.
\end{proof}

To conclude the proof of Theorem~\ref{main}, we must verify that the
modular forms in Theorem~\ref{forms-to-test} satisfy the congruences
described there.  In the next section, we describe how to prove
these congruences by computing finitely many coefficients of each
form, and we summarize the results of these computations.

\section{Conclusion}
In examining the case $p=2$ (as well as some weights for $p=3$ or
$p=5$), we use the following lemma to reduce the congruences we
desire to the computation of a single coefficient of a modular form.
Since the modular forms in question are easily computable,
Theorem~\ref{main} follows.

\begin{lemma} Let $p\in\{2,3,5\}$ and $k\in\{4,6,8,10,14\}$.  Let $f(\tau)\in M_{2-k}^!(p)$
be holomorphic at $\infty$. Suppose that $f(\tau)$ and
$p^{k-2}\left(p(p\tau)^{k-2}f(-1/p\tau)\right)$ have integer Fourier coefficients,
and that $v_p(\mu_{2-k,p})+1-k+\lambda_p/2\geq\epsilon_{k,2}$.  Let
$K$ be the constant Fourier coefficient of $f(\tau)$.  If
$v_p(K)\geq \epsilon_{k,p}-\nu_{2-k,p}$, then $f(\tau)\equiv
K\pmod{p^{\epsilon_{k,p}}}$.
\end{lemma}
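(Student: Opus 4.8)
The plan is to prove the equivalent statement that $f\equiv K\,\alpha_{2-k,p}\pmod{p^{\epsilon_{k,p}}}$. Indeed, since $\alpha_{2-k,p}\equiv 1\pmod{p^{\nu_{2-k,p}}}$ (established above via Lemma~\ref{alpha}) and $K\in\Z$ with $v_p(K)\geq\epsilon_{k,p}-\nu_{2-k,p}$, we have $K\alpha_{2-k,p}\equiv K\pmod{p^{\epsilon_{k,p}}}$, so this implication gives the lemma. Thus it suffices to show that every Fourier coefficient of $g:=f-K\alpha_{2-k,p}$ is divisible by $p^{\epsilon_{k,p}}$. Note that $g\in M_{2-k}^!(p)$ has integer coefficients, is holomorphic on $\mathbb H$, and is holomorphic and vanishing at $\infty$ (its constant term is $K-K=0$); being of negative weight, its only possible pole is at the cusp $0$.

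To control this pole I would pass to the cusp $0$. Writing $w=2-k$ and $\tilde f(\tau)=p^{k-2}\bigl(p(p\tau)^{k-2}f(-1/p\tau)\bigr)=p^{1-w}(p\tau)^{-w}f(-1/p\tau)$, the Fricke involution (cf.\ Corollary~\ref{swap}) shows that $\tilde f$ is weakly holomorphic of weight $2-k$ and level $p$, holomorphic on $\mathbb H$ and at $0$, with a pole of some order $N$ at $\infty$; by hypothesis it is integral. Let $T$ be the minimal pole order of $\theta_{2-k,p}$ at $\infty$. The central claim is that $\tilde f=\sum_{j=0}^{N-T}e_j\,\theta_{2-k,p}\,\psi^j$ with all $e_j\in\Z$. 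Each $\theta_{2-k,p}\psi^j$ is integral, holomorphic on $\mathbb H$ and at $0$, with a pole of order exactly $T+j$ at $\infty$ and leading coefficient $1$, so I would prove the claim by iterated pole removal: subtract the (integer) leading coefficient times the highest such form to cancel the top pole of $\tilde f$, and repeat. The remainder can never be a nonzero form whose pole order at $\infty$ falls below $T$ (this would contradict the minimality defining $\theta_{2-k,p}$), nor a nonzero holomorphic form of negative weight, so the process terminates and forces $e_j\in\Z$.

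Transforming back via $f(\tau)=p^{w-1}\tau^{-w}\tilde f(-1/p\tau)$ and using the laws $\theta_{2-k,p}(-1/p\tau)=\mu_{2-k,p}\tau^{2-k}\alpha_{2-k,p}(\tau)$ and $\psi(-1/p\tau)=p^{\lambda_p/2}\Phi(\tau)$, the expansion of $\tilde f$ becomes the identity $f=\sum_{j=0}^{N-T}c_j\,\alpha_{2-k,p}\Phi^j$ with $c_j=p^{w-1}\mu_{2-k,p}\,p^{j\lambda_p/2}e_j$. The $j=0$ term supplies the constant coefficient, so $c_0=K$ and $g=\sum_{j\geq 1}c_j\,\alpha_{2-k,p}\Phi^j$. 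As each $\alpha_{2-k,p}\Phi^j$ is integral, it remains to bound $v_p(c_j)$ for $j\geq 1$: since $v_p(e_j)\geq 0$ and $w-1=1-k$,
$$v_p(c_j)\geq (w-1)+v_p(\mu_{2-k,p})+j\lambda_p/2\geq v_p(\mu_{2-k,p})+1-k+\lambda_p/2\geq\epsilon_{k,p},$$
where the last step is exactly the numerical hypothesis. Hence every coefficient of $g$ is divisible by $p^{\epsilon_{k,p}}$, and combined with $K\alpha_{2-k,p}\equiv K$ this yields $f\equiv K\pmod{p^{\epsilon_{k,p}}}$.

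The hard part is the middle paragraph. I expect the main obstacle to be establishing the integral expansion of $\tilde f$ at the cusp $0$ by pole removal rather than by dividing by $\theta_{2-k,p}$: the latter is unavailable because $\theta_{2-k,p}$ need not be nonvanishing on $\mathbb H$ (for instance when $p=5$, $k=4$, where the valence bound is not attained), a case still permitted by the hypotheses. This must be combined with careful bookkeeping of the powers of $p$ introduced by the two transforms—through $\mu_{2-k,p}$ and through $\psi(-1/p\tau)=p^{\lambda_p/2}\Phi$—which is precisely what the condition $v_p(\mu_{2-k,p})+1-k+\lambda_p/2\geq\epsilon_{k,p}$ is calibrated to control.
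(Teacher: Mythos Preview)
Your argument is correct and follows essentially the same route as the paper: expand the Fricke transform of $f$ as an integral combination of $\theta_{2-k,p}\psi^j$, pull back to obtain $f=\sum_j c_j\,\alpha_{2-k,p}\Phi^j$ with $c_j=\mu_{2-k,p}\,p^{1-k+j\lambda_p/2}e_j$, and then read off the congruence from the hypothesis on $v_p(\mu_{2-k,p})+1-k+\lambda_p/2$ together with $\alpha_{2-k,p}\equiv 1\pmod{p^{\nu_{2-k,p}}}$. Your pole-removal justification for the integrality of the $e_j$ is a helpful elaboration of a step the paper asserts without detail; otherwise the two proofs coincide.
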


\begin{proof}
The assumptions of the lemma indicate that
$(p\tau)^{k-2}f(-1/p\tau)$ has its only pole at $\infty$; hence, it
may be written as a linear combination
$$(p\tau)^{k-2}f(-1/p\tau)=p^{1-k}\sum_{i=0}^\infty A_i\psi^i(\tau)\theta_{2-k}(\tau),$$
where the $A_i\in\Z$.  Replacing $\tau$ by $-1/p\tau$, and dividing by $\tau^{2-k}$, we find that
$$f(\tau)=\sum_{i=0}^\infty A_i\mu_{2-k,p}p^{1-k+i\lambda_p/2}\Phi^i(\tau)\alpha_{2-k}(\tau).$$
Since $v_p(\mu_{2-k,p})+1-k+\lambda_p/2\geq\epsilon_{k,p}$ we see
that for $i>0$, the coefficient of $\Phi^i\alpha_{2-k}$ is divisible
by $p^{\epsilon_{k,p}}$.  Further, comparing constant coefficients,
we see that $K=A_0\mu_{2-k,p}p^{1-k}$.  Hence, we have that
$$f(\tau)\equiv K\alpha_k(\tau)\pmod{p^{\epsilon_{k,p}}}.$$
Under the assumption that $v_p(K)\geq \epsilon_{k,p}-\nu_{2-k,p}$,
and using that $\alpha_{2-k}\equiv 1\pmod{p^{\nu_{2-k,p}}}$, we see
that
$$f(\tau)\equiv K\pmod {p^{\epsilon_{k,p}}},$$
as desired.
\end{proof}

We note that all of the modular forms considered in
Theorem~\ref{forms-to-test} satisfy the holomorphicity condition of
this theorem, and Corollary~\ref{swap} guarantees that they satisfy
the integrality conditions. The inequality
$v_p(\mu_{2-k,p})+1-k+\lambda_p/2\geq\epsilon_{k,p}$ holds for $p=2$
(all weights), for $p=3$ in weights $k=4,6,10$, and for $p=5$ in
weight $k=4$. Hence, by calculating a single coefficient of each
form, we prove the desired divisibility properties for these cases.
In the cases where the theorem applies, we have calculated  the
necessary constant coefficients and verified Theorem~\ref{main}.

As an example, for $p=2$, $k=14$, we compute the constant
coefficients of $f_{-12,1}|U_2$, $f_{-12,2}|U_2-f_{-12,1}$, and
$f_{-12,3}|U_2$ to be, respectively, $24$, $196608$, and $38263776$.
Since each of these is divisible by $2^3$ and $\nu_{-12,2}=4$, we see that each of the
modular forms is congruent to a constant modulo $2^7$, proving Theorem~\ref{main} for $p=2$ in weight $14$.

We remark that this lemma does not immediately work in general
for $p=3$ and $p=5$ because we do not always have the inequality
$v_p(\mu_{2-k,p})+1-k+\lambda_p/2\geq\epsilon_{k,p}$. One could
prove a similar theorem, involving checking the divisibility of additional
coefficients, but we find it simpler to perform explicit
computations, as described below.

\subsection{Computational proof of the remaining cases}

For $p=3$, $k=8, 14$ and for $p=5$, $k>4$ we now describe the
calculations needed to prove the congruences in Theorem~\ref{main}
and give a single illustrative example.

Let $f$ be one of the modular forms of weight $2-k$ described in
Theorem~\ref{forms-to-test}.  We wish to show that $f$ satisfies the
congruence stated in Theorem~\ref{forms-to-test}.  We begin by using
Lemma~\ref{poles} to compute the Fourier expansion of
$p(p\tau)^{k-2}f(-1/p\tau)$ to high precision (say to
${O}(q^{500})$).  This allows us to easily determine a linear
combination
$$p(p\tau)^{k-2}f(-1/p\tau)=\sum_{i=0}^N{A_i}{p^{2-k}}\psi^i(\tau)\theta_{2-k}(\tau),$$
with $A_i\in\Z$.  Replacing $\tau$ by $-1/p\tau$ and dividing by $p\tau^{2-k}$, we obtain a linear combination
$$f(\tau)=\sum_{i=0}^N  A_i\mu_{2-k,p}p^{\lambda i/2+1-k}\Phi^i(\tau)\alpha_k(\tau)
=\sum_{i=0}^N B_i\Phi^i(\tau)\alpha_k(\tau).$$
In each case, we find that for $i>0$, $v_p( B_i)\geq\epsilon_{k,p}$,
so that
$$f(\tau)\equiv B_0\alpha_k(\tau)\pmod {p^{\epsilon_{k,p}}}.$$
We then check that $v_p(B_0)\geq\epsilon_{k,p}-\nu_{2-k,p}$, thereby
proving that
$$f(\tau)\equiv B_0\pmod{p^{\epsilon_{k,p}}},$$
as desired.

As an example (the simplest) for $p=3$, $k=8$, we must show that
$f_{-6,1}|U_3$ is congruent to a constant modulo $3^3$.  Following
the procedure above, we find that
$$(f_{-6,1}|U_3)(\tau)=\sum_{i=0}^{7} B_i\Phi^i(\tau)\alpha_{-6}(\tau)$$
with
$$B_0=-480,\quad B_1=-10451430,\quad B_2=-8628476076,$$
$$\quad B_3=-1922380466418,\quad B_4=-177993370102248,\quad B_5=-7892493396961545,$$
$$\quad B_6= -166771816996665690,\quad B_7=-1350851717672992089.$$
One checks easily that each $B_i$ with $i>0$ is divisible by
$3^{\epsilon_{8,3}}=3^3$ (indeed, even by $3^5$) and that $3|B_0$.
Since $\nu_{-6,3}=2$, we see that $f_{-6,1}|U_3\equiv -480\pmod
{3^3}$.

Note that there is nothing difficult about the computations; they
are nothing more than basic arithmetic with $q$-series.  They are
somewhat daunting to write down; for instance, if $p=5$ and $k =
14$, proving that $f_{-12,6}$ is congruent to a constant modulo 5
involves working with a linear combination of 144 terms of the form
$\Phi^i\alpha_{-12,5}$, many of which have coefficients over 290
digits long.  All of the computations were done using
GP/PARI~\cite{Pari}, and the scripts used are available from the
authors upon request.

These computations have been performed for $p=3$, $k=8,10,14$ and
$p=5$, $k=4,6,8,10,14$, and complete the proof that
Theorem~\ref{main} is true.

\bibliographystyle{amsplain}

\end{document}